\newtheorem{Def}{Definition}
\newtheorem{Lem}{Lemma}
\newtheorem{Thm}{Theorem}
\newtheorem{Cor}{Corollary}
\newtheorem{Rem}{Remark}
\title[Analytic properties and the asymptotic behavior...]{Analytic properties and the asymptotic behavior of the area function of a Funk metric}
\author{Cs. Vincze}
\address{Inst. of Math., Univ. of Debrecen \\
H-4010 Debrecen, P.O.Box 12 \\
Hungary}
\email{csvincze@science.unideb.hu}
\keywords{Minkowski functionals, Funk metrics, Area function, Finsler spaces}
\subjclass{53C60, 53C65 and 52A21.}
\begin{document}
\begin{abstract}
In Minkowski geometry the unit ball is a compact convex body $K$ containing the origin in its interior. The boundary of the body is formed by the unit vectors. We also have a so-called Minkowski functional to measure the length of vectors. By changing the origin in the interior of the body we have a smoothly varying family of Minkowski functionals. This is called the Funk metric. Under some regularity conditions the Minkowski functionals allow us to measure the volume (area) of the indicatrix bodies (hypersurfaces). Some homogenity properties provide the volume and the area to be proportional. The area as the function of the base point varying in the interior of $K$ is strictly convex \cite{Vincze3}. This is called the area function of the Funk manifold. If the minimum is attained at the origin then $K$ is said to be balanced. The idea comes from the generalization of Brickell's theorem \cite{Brickell} for Finsler manifolds with balanced indicatrices \cite{Vincze3}. As a continuation of \cite{Vincze3} we are going to investigate analytic properties and the asymptotic behavior of the area function of a Funk manifold. We prove that the area function is locally analytic and the area can be arbitrary large near to the boundary of $K$. Therefore the minimum always attained at a uniquely determined interior point of $K$. If we apply the result to the indicatrices of a Finsler manifold point by point then the uniquely defined minima of the area functions constitute a vector field. We prove that it is differentiable. Therefore each indicatrix body can be translated in such a way that the translated body is balanced and we always have an associated Finsler manifold with balanced indicatrices. Finsler manifolds having balanced indicatrices represent a class of Finsler spaces such that the so-called Brickell's conjecture holds \cite{Brickell}, see also \cite{Vincze3}.
\end{abstract}
\maketitle

\section{Introduction}

\subsection{Homogeneous functions} Let $\mathbb{R}^n$ be the standard real coordinate space equipped with the canonical inner product. The canonical coordinates are denoted by $y^1,\ldots,y^n$. The set $B=\{v\in \mathbb{R}^n\ | \ (v^1)^2+\ldots+(v^n)^2\leq 1\}$
is the unit ball with boundary $\partial B$ formed by Euclidean unit vectors.  The function
$\varphi\colon \mathbb{R}^n\to \mathbb{R}$ is called \emph{positively homogene\-ous of degree $k$} if $\varphi(tv)=t^k\varphi(v)$ for all $v\in \mathbb{R}^n$ and $t>0$.  Let the function $\varphi$ be differentiable away from the origin. Euler's theorem states that it is positively homogeneous of degree $k$ if and only if
$C\varphi =k\varphi$, where
$$C:=y^1\frac{\partial}{\partial y^1}+\ldots+y^n\frac{\partial}{\partial y^n}$$
is the so-called Liouville vector field. Note that the partial derivatives of $\varphi$ are positively homogeneous functions of degree $k-1$. The analytic properties are reduced at the origin in general. It is known that if a function of class $C^k$ is positively homogeneous of degree $k\geq 0$ then it must be polynomial of degree $k$. In what follows we suppose that homogeneous functions are at least continuously differentiable away from the origin unless otherwise stated.

\subsection{Minkowski functionals and associated objects} Let $K\subset \mathbb{R}^n$ be a convex body containing the origin in its interior and consider a non-zero element $v\in \mathbb{R}^n$.
The value $L(v)$ of the \emph{Minkowski functional} induced by $K$ is defined as the only positive real number such that $v/L(v)\in \partial K$, where $\partial K$ is the boundary of $K$; for the general theory of convex sets and Minkowski spaces see \cite{Lay} and \cite{Thompson}.

\begin{Def}
The function $L$ is called a \emph{Finsler-Minkowski functional} if each non-zero element $v\in \mathbb{R}^n$ has an open neighbourhood such that the restricted function is of class at least $\ \mathcal{C}^4$ and the Hessian matrix
$$g_{ij}=\frac{\partial^2 E}{\partial y^j \partial y^i}$$
of the energy function $E:=(1/2)L^2$ is positive definite.
\end{Def}

\noindent
I. \cite{BCS} \cite{Matsu1} Since the differentiation decreases the degree of homoge\-ni\-ty in a systematical way we have that $g_{ij}$'s are positively homogeneous of degree zero. To express the infinitesimal change of the inner product it is usual to introduce the (lowered) Cartan tensor by the components
\begin{equation}
\mathcal{C}_{ijk}=\frac{1}{2}\frac{\partial g_{ij}}{\partial y^k}=\frac{1}{2}\frac{\partial ^3 E}{\partial y^k \partial y^j \partial y^i}.
\end{equation}
It is totally symmetric and $y^k\mathcal{C}_{ijk}=0$
because of the zero homogenity of $g_{ij}$. Using its inverse $g^{ij}$ we can introduce the quantities $\mathcal{C}^k_{ij}=g^{kl}\mathcal{C}_{ijl}$ to express the covariant derivative
$$\nabla_X Y=\left(X^i\frac{\partial Y^k}{\partial y^i}+X^iY^j\mathcal{C}^k_{ij}\right)\frac{\partial}{\partial y^k}$$
with respect to the Riemannian metric $g$. The formula is based on the standard Lévi-Civita process and repeated pairs of indices are automatically summed as usual. The curvature tensor $\mathbb{Q}$ can be written as
$$Q_{ijk}^l=\mathcal{C}^l_{sk}\mathcal{C}^s_{ij}-\mathcal{C}^l_{sj}\mathcal{C}^s_{ik}.$$ 
Since
$$y^ig_{ij}=\frac{\partial E}{\partial y^j}=L\frac{\partial L}{\partial y^j},\ \ \textrm{i.e.}\ \ g(C,Y)=YE=L(YL),$$
we have that the Liouville vector field is an outward-pointing unit normal to the indicatrix hypersurface $\partial K:=L^{-1}(1)$. On the other hand
$\nabla_X C=X$ which means that the indicatrix is a totally umbilical hypersurface and $\textrm{div}\  C=n$. 

\vspace{0.2cm}
\noindent
II. Consider the volume form
$$d\mu=\sqrt{\det g_{ij}}\ dy^1\wedge \ldots \wedge dy^n$$
and let $f$ be a homogeneous function of degree zero. We have by Euler's theorem that
$$\textrm{div}\ (fC)=f\textrm{div}\ C+Cf=nf.$$
Using the divergence theorem (with the Liouville vector field as an outward-pointing unit normal to the indicatrix hypersurface) for the vector field $X=fC$ it follows that
\begin{equation}
\label{sec}
\int_K f \, d\mu=\frac{1}{n}\int_{\partial K}f\, \mu,
\end{equation}
where
$$\mu=\iota_C d\mu=\sqrt{\det g_{ij}}\ \sum_{i=1}^n (-1)^{i-1} y^i dy^1\wedge\ldots\wedge dy^{i-1}\wedge dy^{i+1}\ldots \wedge dy^n$$
denotes the induced volume form on the indicatrix hypersurface\footnote{According to the isolated singularity at the origin, the integral over the indicatrix body can be accurately taken as the limit
$$\int_K f \, d\mu:=\lim_{U \to \{{\bf{0}}\}} \int_{K\setminus U} f \, d\mu $$
as $U$ schrinks to the origin. Since the continuity (away from the origin) implies that $f$ attains both its minima and maxima at the points of $\partial U$
and zero homogenity (constant values along the rays emanating from the origin) provides them to be global minima and maxima, the limit obviously exists; recall that the volume element $\sqrt{\det g_{ij}}$ is also homogeneous of degree zero.}. We have the following method to calculate integrals of the form $\int_{\partial K} f\mu$, where the integrand $f$ is a zero homogeneous function \cite{Vincze3}. Consider the mapping
$$v \mapsto T(v):=\varphi(v)v,\ \ \textrm{where}\ \ \varphi(v)=\frac{|v|}{L(v)},$$ 
as a diffeomorphism (away from the origin); $|v|$ denotes (for example) the usual Euclidean norm of vectors in $\mathbb{R}^n$. Then
$$\int_{ K} f\, d \mu=\int_{T^{-1}(K)}\varphi^nf\circ \varphi \sqrt{\det g_{ij}}\circ \varphi=\int_{B}\varphi^nf\, d\mu$$
because of the zero homogenity of the integrand. Therefore
\begin{equation}
\int_{B}\varphi^nf\, d\mu=\int_{ K} f\, d \mu
\end{equation}
and, by equation (2),
\begin{equation}
\int_{\partial B}\varphi^nf\, \mu=\int_{\partial K} f\, \mu.
\end{equation} 
We can introduce the following averaged inner products
\begin{equation}
\label{averaged1}
\gamma_1(v,w):=\int_{\partial K}g(v,w)\, \mu \ \ \textrm{and}\ \ \gamma_2(v,w)=\int_{\partial K} m(v,w)\, \mu
\end{equation}
on the vector space, where $m(v,w)=g(v,w)-(V L)(W L)$ is the angular metric tensor,
$$V=v^1\frac{\partial}{\partial y^1}+\ldots+v^n\frac{\partial}{\partial y^n}\ \ \textrm{and}\ \ W=w^1\frac{\partial}{\partial y^1}+\ldots+w^n\frac{\partial}{\partial y^n}.$$
Furthermore
$$\gamma_3(v,w)=\gamma_1(v,w)-\gamma_2(v,w)=\int_{\partial K} (V L)(W L)\, \mu.$$
The theory of averaging and its applications is a relatively new and popular trend in Finsler-Minkowski geometry with a rapidly increasing number of papers; \cite{Vincze1}, \cite{Vincze2} and \cite{Tor}, see also \cite{Ai}, \cite{C2}, \cite{Mat2}, \cite{Vincze3} and \cite{V7}. As a recent contribution to the topic see \cite{C1} which contains several candidates to be averaged together with an extensive overview about the history of averaging in Finsler geometry. The origin goes back to the alternative proof of Szabó's theorem on the Riemann metrizability of Berwald manifolds \cite{Vincze1} and the solution of Matsumoto's problem on conformally equivalent Berwald manifolds \cite{Vincze2}, see also \cite{V5}. In \cite{Vincze3} 
some new steps were taken by introducing the associated Randers-Minkowski functionals. These are given by a linear perturbation of the associated Riemannian metric. The linear term is defined as
$$\beta(v):=\int_{\partial K} VL\, \mu, \ \ \textrm{where}\ \ V=v^1\frac{\partial}{\partial y^1}+\ldots+v^n\frac{\partial}{\partial y^n};$$
for the details see \cite{Vincze3}. Using the divergence theorem we have that
$$\int_K \textrm{div}\ \big(LV-(VL)C\big)\, d\mu=0$$
because the vector field $LV-(VL)C$ is tangential to the indicatrix hypersurface: $\big(LV-(VL)C\big)L=L(VL)-(VL)L=0.$ Since $C(VL)=0$
$$\int_K \textrm{div}\ (LV)\, d\mu=\int_K \textrm{div}\ ((VL)C)\, d\mu=n\int_K VL \, d\mu \stackrel{(2)}{=}\int_{\partial K} VL\,  \mu.$$
On the other hand
$$\textrm{div}\ (LV)=VL+Lv^iC_i,$$
where $C_i=g^{jk}C_{ijk}$. Formula (\ref{sec}) says that 
$$n\int_{K} \textrm{div}\ (LV)\, d\mu=\int_{\partial K} \textrm{div}\ (LV)\, \mu=\int_{\partial K} VL+Lv^iC_i\, \mu$$
and thus
\begin{equation}
\label{perturb}
v^i\int_{\partial K} LC_i\, \mu=(n-1)\int_{\partial K} VL\, \mu=(n-1)\beta(v).
\end{equation}

\begin{Def} \emph{\cite{Vincze3}}
The body $K$ is called balanced if $\beta=0$.
\end{Def}

Finsler-Minkowski functionals with balanced indicatrices represent a class of spaces such that the so-called Brickell's conjecture holds \cite{Brickell}, see also \cite{Vincze3}. The main theorem in \cite{Vincze3} states that if $L$ is a Finsler-Minkowski functional with a balanced indicatrix body of dimension at least three and the Lévi-Civita connection $\nabla$ has zero curvature then $L$ is a norm coming from an inner product, i.e. the Minkowski vector space reduces to a Euclidean one.
The original version was proved by F. Brickell \cite{Brickell} Theorem 1 (see also \cite{Schneider}) using the stronger condition of absolute homogenity (the symmetry of $K$ with respect to the origin) instead of the balanced indicatrix body. \emph{It seems possible that the equation \emph{(1)} imply that the functions $Y^h$ are homogeneous linear functions. If this were so, Theorem 1 would follow from \emph{(3)} under the weaker condition of positive homogenity} \cite{Brickell}, p. 327. The proof of the generalized theorem is essentially based on Santaló's inequality and its applications in Finsler-Minkowski geometry; see e.g. \cite{Duran}. General investigations on the volume of the unit spheres in a Finsler space can be found in \cite{BS}. In what follows we summarize the theoretical background and the theory will be used in case of a Funk manifold. 

\subsection{Finsler spaces} \cite{BCS}, \cite{Matsu1} and \cite{Shen3} \emph{Finsler geometry is a non-Rieman\-ni\-an geometry
in a finite number of dimensions. The differentiable structure is the same as the Riemannian
one but distance is not uniform in all directions. Instead of the Euclidean spheres in the tangent
spaces, the unit vectors form the boundary of general convex sets containing the origin in their interiors.} (M. Berger) 

Let $M$ be a differentiable manifold with local coordinates $u^1$, ..., $u^n$ on $U\subset M$. The induced coordinate system on the tangent manifold consists of the functions 
$$x^1:=u^1\circ \pi, \ldots, x^n:=u^n\circ \pi\ \ \textrm{and}\ \ y^1:=du^1, \ldots,y^n:=du^n,$$ 
where $\pi \colon TM\to M$ is the canonical projection. A Finsler structure on a differentiable manifold $M$ is a smoothly varying family $F\colon TM\to \mathbb{R}$ of Finsler-Minkowski functionals in the tangent spaces satisfying the following conditions:
\begin{itemize}
\item each non-zero element $v\in TM$ has an open neighbourhood such that the restricted function is of class at least $\ \mathcal{C}^4$ in all of its variables $x^1$, ..., $x^n$ and $y^1$, ..., $y^n$,
\item the Hessian matrix of the energy function $E:=(1/2)F^2$ with respect to the variables $y^1, \ldots, y^n$ is positive definite. 
\end{itemize}

\vspace{0.2cm}
\noindent
I. Let $f\colon TM\to \mathbb{R}$ be a zero homogeneous function and let us define the average-valued function \cite{C1}
$$A_f(p):=\int_{\partial K_p} f\,  \mu_p,$$
where $\partial K_p$ is the indicatrix hypersurface belonging to the Finsler-Minkow\-ski functional of the tangent space and $\mu_p$ is the restriction of the volume form
$$\sqrt{\det g_{ij}}\ \sum_{i=1}^n (-1)^{i-1} y^i dy^1\wedge\ldots\wedge dy^{i-1}\wedge dy^{i+1}\ldots \wedge dy^n$$
to the Cartesian product $T_pM \times \ldots \times T_pM$. We use the sript $p$ to express that canonical objects of a Finsler-Minkowski functional are taken point by point\footnote{ Note that integrals of the form
$$\int_{K}f\, d\mu_p=\int_{y(K)}f\circ y^{-1}\sqrt{\det g_{ij}}\circ y^{-1}\, dy^1\ldots dy^n$$
are independent of the choice of the coordinate system (orientation). Actually, the orientation is convenient but not necessary to make integrals of functions sense \cite{Warner}.}.

\vspace{0.2cm}
\noindent
II. {\bf{Horizontal distributions}} \cite{Grif1}, \cite{Grif2} and \cite{Szil2}. To compute the partial derivatives of average-valued functions we need the notion of horizontal distributions: using compatible collections  $G_i^k$ of functions on local neighbourhoods of the tangent manifold let us define the (horizontal) vector fields
\begin{equation}
X_i^h=\frac{\partial}{\ \partial x^i}-G_i^k\frac{\partial}{\ \partial y^k}\ \ (i=1, \ldots, n).
\end{equation}
\begin{Def} 
The horizontal distribution $h$ is a collection of subspaces spanned by the vectors $X_i^h$ as the base point runs through the non-zero elements of the tangent manifold. If the functions $G_i^k$ are positively homogeneous of degree $1$ then the distribution is called \emph{homogeneous}. In case of
\begin{equation}
\label{grad}
\frac{\partial G_i^k}{\partial y^j}=\frac{\partial G_j^k}{\partial y^i}
\end{equation}
we say that $h$ is \emph{torsion-free}\footnote{According to the basic results of the classical vector calculus, condition (\ref{grad}) says that $G^k$'s are the coordinates of a gradient-type vector field with respect to the variables $y^1, \ldots, y^n$.}. The horizontal distribution is \emph{conservative} if the derivatives of $F$ vanish into the horizontal directions.
\end{Def}
According to Section 3 in \cite{Vincze3}
$$\frac{\partial A_f}{\partial u^i}_p=\int_{\partial K_p}-nf X_i^h ln F+X_i^h f+
f\frac{1}{2}g^{mn}X_i^hg_{mn}+f\frac{\partial G_i^k}{\partial y^k}\, \mu_p.$$
In terms of index-free expressions 
\begin{equation}
X_pA_f=\int_{\partial K_p}-nf X^h ln F+X^h f+f\tilde{\mathcal{C}}^{'}(X^c)\, \mu_p,
\end{equation}
where $\tilde{\mathcal{C}}^{'}$ is the semibasic trace of the second Cartan tensor
$$g(\mathcal{C}'(X_i^c,X_j^c),X_k^v)=$$
$$=\frac{1}{2}\left(X_i^h g(X_j^v,X_k^v)-g([ X_i^h,X_j^v],X_k^v)-g(X_j^v,[ X_i^h,X_k^v])\right)$$
associated to $h$. Recall that $X^v$, $X^c$ and $X^h$ are the vertical, complete and horizontal lifts of the vector field $X$ on the base manifold. Especially 
$$X_i^v:=\frac{\partial }{\ \partial y^i},\ \ X_i^c:=\frac{\partial }{\ \partial x^i}\ \ \textrm{and}\ \ X_i^h=\frac{\partial}{\ \partial x^i}-G_i^k\frac{\partial}{\ \partial y^k}.$$

\begin{Cor} \cite{Vincze3}
If the horizontal distribution is conservative then we have
the reduced formula
\begin{equation}
\label{basicformula}
X_pA_f=\int_{\partial K_p}X^h f+f\tilde{\mathcal{C}^{'}}(X^c)\, \mu_p.
\end{equation}
\end{Cor}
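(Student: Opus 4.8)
The plan is to obtain the reduced formula directly from the general expression for the horizontal derivative of the average-valued function $A_f$ displayed just above the corollary, namely
$$X_pA_f=\int_{\partial K_p}\bigl(-nf\,X^h\ln F+X^h f+f\tilde{\mathcal{C}}^{'}(X^c)\bigr)\, \mu_p,$$
by checking that the first term in the integrand is identically zero as soon as the horizontal distribution $h$ is conservative.

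First I would spell out the hypothesis. By the definition of a conservative horizontal distribution the derivative of $F$ along every horizontal direction vanishes, i.e. $X^hF=0$ for the horizontal lift of an arbitrary vector field $X$ on the base manifold. Since $F$ restricted to a punctured tangent space is a Finsler-Minkowski functional, it is strictly positive off the zero section, so $\ln F$ is a well-defined smooth function there and the chain rule gives $X^h\ln F=(X^hF)/F=0$ on the whole slit tangent manifold, in particular along each indicatrix hypersurface $\partial K_p$. Hence the summand $-nf\,X^h\ln F$ contributes nothing to the integrand.

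I would then substitute this vanishing back into the general formula, which leaves exactly
$$X_pA_f=\int_{\partial K_p}\bigl(X^h f+f\tilde{\mathcal{C}}^{'}(X^c)\bigr)\, \mu_p,$$
that is, equation (\ref{basicformula}). I do not expect any real obstacle: no integration by parts or averaged cancellation is involved, we merely drop a term of the integrand that is pointwise zero, so the reduction under the integral sign is immediate. It is perhaps worth adding the remark that conservativity is precisely the condition making the indicatrix bodies $K_p$ ``horizontally constant'', so that a horizontal variation of the average $A_f$ is driven only by the variation of $f$ itself together with the infinitesimal change of the Riemannian volume data encoded in the semibasic trace $\tilde{\mathcal{C}}^{'}$ of the second Cartan tensor.
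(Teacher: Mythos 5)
Your argument is correct and is exactly the reduction the paper intends: the general formula for $X_pA_f$ stated just before the corollary contains the term $-nf\,X^h\ln F$, and the definition of a conservative horizontal distribution ($X^hF=0$, hence $X^h\ln F=(X^hF)/F=0$ since $F>0$ off the zero section) kills that term pointwise, leaving formula (\ref{basicformula}). No further comment is needed.
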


In what follows we shall use the canonical horizontal distribution of the Finsler manifold which is uniquely determined by the following conditions: it is conservative, torsion-free and homogeneous. 

\vspace{0.2cm}
\noindent
III. Consider the associated Riemannian metric tensors
$$\gamma_1(X_p,Y_p)=\int_{\partial K_p} g(X^v,Y^v)\, \mu_p,\ \ \gamma_2(X_p,Y_p)=\int_{\partial K_p} m(X^v,Y^v)\, \mu_p,$$
where 
$$m(X^v,Y^v)=g(X^v,Y^v)-(X^vL)(Y^vL)$$
is the angular metric tensor and
$$\gamma_3(X_p,Y_p)=\gamma_1(X_p,Y_p)-\gamma_2(X_p,Y_p)=\int_{\partial K_p} (X^vF)(Y^vF)\, \mu_p.$$
The weighted versions are
$$\Gamma_i(X_p,Y_p)=\frac{1}{\textrm{Area}\ (\partial K_p)}\gamma_i(X_p,Y_p),$$
where $i=1, 2$, $3$ and  
$$\textrm{Area}\ (\partial K_p)=\int_{\partial K_p} 1\, \mu_p.$$
After introducing the $1$-form
$$\beta(X_p)=\int_{\partial K_p} X^vF\, \mu_p$$
we have the Randers-Minkowski functionals
$$F_1(v):=\sqrt{\Gamma_1(v,v)}+\frac{\beta(v)}{\textrm{Area}\ (\partial K_{\pi(v)})} \ \ \textrm{and}\ \ F_3(v):=\sqrt{\Gamma_3(v,v)}+\frac{\beta(v)}{\textrm{Area}\ (\partial K_{\pi(v)})}$$
associated to the Finsler space \cite{Vincze3}.

\subsection{Funk metrics} \cite{O}, \cite{Shen2} and \cite{Shen3} Let $K\subset \mathbb{R}^n$ be a convex body containing the origin in its interior and suppose that the induced function $L$ is a Finsler-Minkowski functional on the vector space $\mathbb{R}^n$. By changing the origin in the interior of $K$ we have a smoothly varying family of Finsler-Minkowski functionals parameterized by the interior points of $K$:
\begin{equation}
L\left(p+\frac{v}{L_p(v)}\right)=1,
\end{equation}
where the script refers to the base point of the tangent vector $v$. Let $U$ be the interior of the indicatrix body. The manifold $U$ equipped with the Finslerian fundamental function
$$F\colon TU=U\times \mathbb{R}^n\to \mathbb{R},\ v_p\mapsto F(v_p):=L_p(v)$$
is called a Funk space (or Funk manifold). It is a special Finsler manifold.
Another notations and terminology: $K_p$ denotes the unit ball with respect to the functional $L_p$ and $\partial K_p$ is its boundary. Especially $K_{{\bf 0}}=K$ and $\partial K_{{\bf 0}}=\partial K$,
$$F(v_{\bf{0}}):=L_{\bf{0}}(v)=L(v)\ \ \textrm{and}\ \ \frac{\partial L}{\partial u^i}_{v}=\frac{\partial F}{\partial y^i}_{v_{\bf{0}}},$$
where $u^1$, $\ldots$, $u^n$ denotes the canonical coordinate system of $\mathbb{R}^n$ restricted to $U$ as a base manifold. Recall that the Riemann-Finsler metric
$$g_{ij}=\frac{1}{2}\frac{\partial^2 F^2}{\partial y^i \partial y^j}$$
provides any tangent space (away from its origin) to be a Riemannian manifold; $\partial K_p$ equipped with the usual induced Riemannian structure is a totally umbilical Riemannian submanifold of the corresponding tangent space (see subsection 1.2).
\begin{Thm} \emph{\cite{Vincze3}}
For any $p\in U$ the indicatrix hypersurfaces $\partial K_p$ and $\partial K$ are conformal to each other as Riemannian submanifolds in the corresponding tangent spaces. The conform mapping between these structures comes from the projection
$$\rho(v_p):=p+\frac{v}{L_p(v)}\in \partial K$$
of the tangent space $TU$. Especially
$$g_{\rho(v_p)}(w,z)=\left(1-p^k\frac{\partial L}{\partial u^k}_{\rho(v_p)}\right)g_{v_p}(w,z),$$
where $w$ and $z$ are tangential to the indicatrix hypersurface $\partial K$ at $\rho(v_p)$, i.e they are tangential to $\partial K_p$ at $v_p$ too.
\end{Thm}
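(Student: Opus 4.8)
The plan is to differentiate the defining relation of the Funk metric twice in the fibre variables and then to restrict the resulting identities to the directions tangent to the indicatrix. Throughout, write $F(v):=L_p(v)$ and $q(v):=\rho(v_p)=p+v/F(v)$ for $v\in\mathbb{R}^n\setminus\{0\}$, and abbreviate $F_{,j}:=\partial F/\partial y^j$, so that the defining relation reads $L(q(v))\equiv 1$. Since $\partial q^k/\partial y^j=\delta^k_j/F-v^kF_{,j}/F^2$, the chain rule applied to $L\circ q\equiv 1$ gives $\frac{\partial L}{\partial u^k}\big|_q\,\partial q^k/\partial y^j=0$; collecting terms and using Euler's theorem for the degree-one function $L$, i.e. $q^k\frac{\partial L}{\partial u^k}\big|_q=L(q)=1$, one isolates
$$\frac{\partial L}{\partial u^j}\Big|_q=\lambda\,F_{,j},\qquad \lambda:=1-p^k\frac{\partial L}{\partial u^k}\Big|_q,$$
and, since $L$ is convex and $L(p)<1$, a short estimate from the supporting hyperplane inequality shows $\lambda>0$, so $\lambda$ is a genuine conformal factor.

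Differentiating this identity once more in $y^i$, substituting $\partial q^k/\partial y^i$, and eliminating $v^k\frac{\partial^2 L}{\partial u^k\partial u^j}\big|_q$ by Euler's theorem for the degree-zero functions $\partial L/\partial u^j$ (so that $q^k\frac{\partial^2 L}{\partial u^k\partial u^j}\big|_q=0$) yields, after collecting terms,
$$\frac1F\frac{\partial^2 L}{\partial u^i\partial u^j}\Big|_q+\frac{F_{,i}}{F}\,p^k\frac{\partial^2 L}{\partial u^k\partial u^j}\Big|_q=\frac{\partial\lambda}{\partial y^i}\,F_{,j}+\lambda\,\frac{\partial^2 F}{\partial y^i\partial y^j}.$$
Now contract with $w^iz^j$, where $w,z$ are tangent to $\partial K$ at $q$. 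The unit ball of $L_p$ is the translate $K-p$, so $\partial K_p$ and $\partial K$ are parallel translates and the same $w,z$ are tangent to $\partial K_p$ at $v_p$, with $\rho|_{\partial K_p}$ (which is simply $v\mapsto v+p$) carrying them onto themselves. Hence $w^iF_{,i}=z^jF_{,j}=0$ (because $\partial K_p$ is the level set $L_p=1$) and $w^i\frac{\partial L}{\partial u^i}\big|_q=z^j\frac{\partial L}{\partial u^j}\big|_q=0$, so every term carrying a bare first derivative drops out; since $v_p\in\partial K_p$ forces $F(v)=1$, what remains is
$$w^iz^j\frac{\partial^2 L}{\partial u^i\partial u^j}\Big|_q=\lambda\,w^iz^j\frac{\partial^2 F}{\partial y^i\partial y^j}\Big|_{v_p}.$$

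Finally, from $E=\tfrac12 L^2$ one has $g_{ij}=L\frac{\partial^2 L}{\partial u^i\partial u^j}+\frac{\partial L}{\partial u^i}\frac{\partial L}{\partial u^j}$; on $\partial K$ (where $L=1$) and for tangential $w,z$ the second term vanishes, so $g_q(w,z)=w^iz^j\frac{\partial^2 L}{\partial u^i\partial u^j}\big|_q$, and the identical computation for $F$ gives $g_{v_p}(w,z)=w^iz^j\frac{\partial^2 F}{\partial y^i\partial y^j}\big|_{v_p}$. Substituting into the last display gives $g_{\rho(v_p)}(w,z)=\lambda\,g_{v_p}(w,z)$, which is the asserted formula; it also exhibits $\rho$ as a conformal diffeomorphism of $\partial K_p$ onto $\partial K$.

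The only genuinely delicate part is the bookkeeping in the second differentiation, together with the observation that \emph{precisely} the tangential contractions annihilate every non-conformal term — contracting with arbitrary vectors would not yield a conformal relation. Everything else reduces to the chain rule and two applications of Euler's theorem.
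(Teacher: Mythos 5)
Your proof is correct. The paper itself states Theorem 1 without proof, simply citing \cite{Vincze3}, so there is nothing internal to compare against; your route --- differentiating the defining relation $L\bigl(p+v/L_p(v)\bigr)=1$ twice in the fibre variables, using Euler's theorem for the degree-zero derivatives to trade $v^k\frac{\partial^2 L}{\partial u^k\partial u^j}\big|_q$ for $-F\,p^k\frac{\partial^2 L}{\partial u^k\partial u^j}\big|_q$, and then killing all non-conformal terms by contracting with tangential vectors --- is the standard derivation, and every step checks out: the first-order identity $\frac{\partial L}{\partial u^j}\big|_q=\lambda F_{,j}$, the positivity of $\lambda$ via the supporting-hyperplane inequality (\ref{geomseries}), the observation that $\rho|_{\partial K_p}$ is the translation $v\mapsto v+p$ with identity differential on tangent directions, and the reduction $g_q(w,z)=w^iz^j\frac{\partial^2 L}{\partial u^i\partial u^j}\big|_q$ on the unit level set for tangential arguments.
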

\begin{center}
\begin{figure}
\includegraphics[viewport=0 0 595 842, scale=0.15]{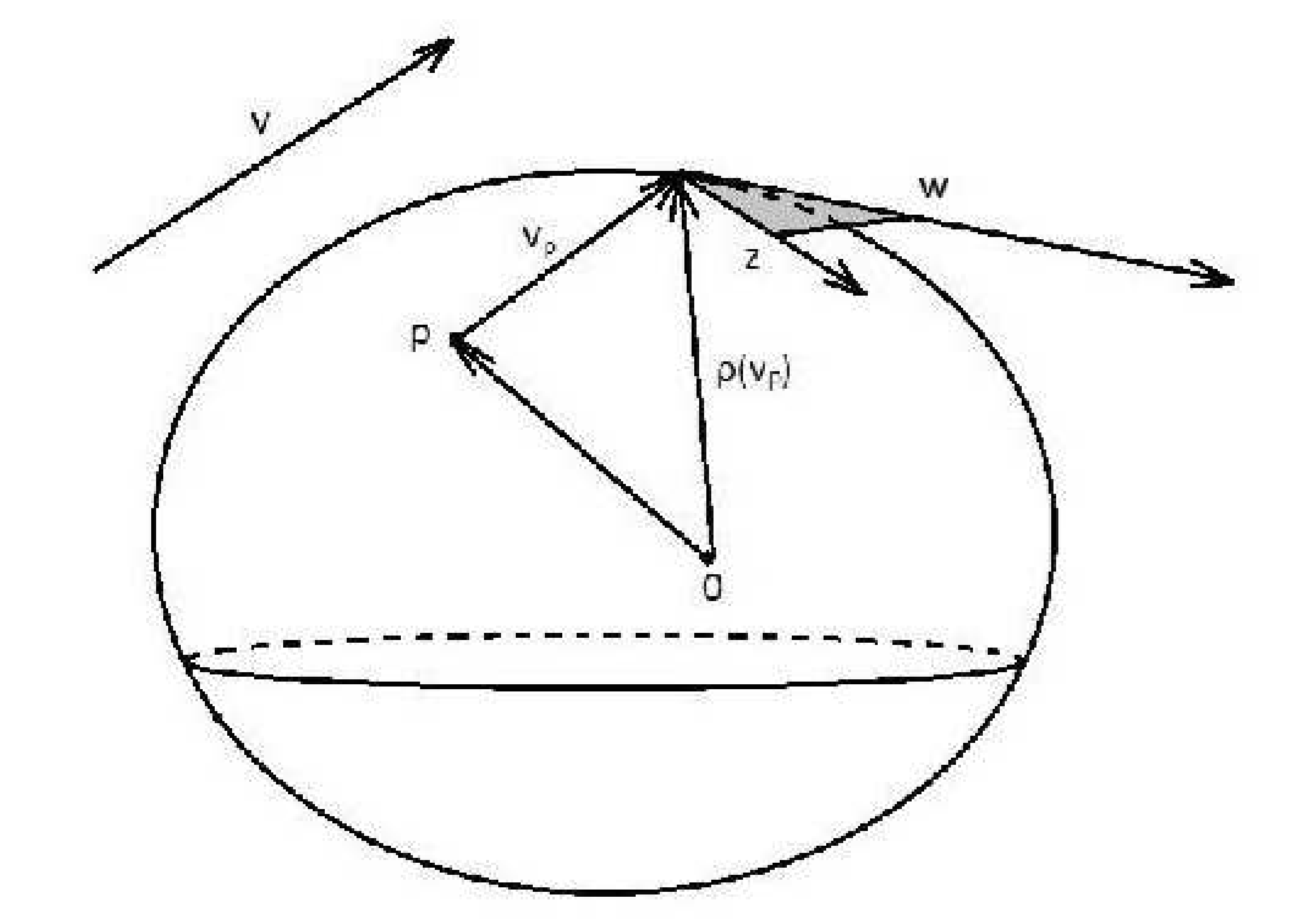}
\caption{Conformality}
\end{figure}
\end{center}
Further important relationships between the canonical data of a Funk space are based on Okada's theorem \cite{O}:
\begin{equation}
\label{okada}
0=\frac{\partial F}{\partial y^i}-\frac{1}{F}\frac{\partial F}{\partial x^i} \ \Rightarrow\ F\frac{\partial F}{\partial y^i}=\frac{\partial F}{\partial x^i}.
\end{equation}
Okada's theorem is a rule how to change derivatives with respect to $x^i$ and $y^i$. This results in relatively simple formulas for the canonical objects of the Funk manifold. In what follows we are going to summarize some of them (proofs are straightforward calculations \cite{Shen3}):
\begin{equation}
G^k=\frac{1}{2}y^kF,\ \ G_i^k=\frac{\partial G^k}{\partial y^k}=\frac{F}{2}\delta_{i}^k+\frac{1}{2}y^k \frac{\partial F}{\partial y^i},
\end{equation}
\begin{equation}
\label{canhordistr}
X_i^h=\frac{\partial}{\ \partial x^i}-G_i^k\frac{\partial}{\ \partial y^k}=\frac{\partial}{\ \partial x^i}-\left(\frac{F}{2}\delta_{i}^k+\frac{1}{2}y^k \frac{\partial F}{\partial y^i}\right)\frac{\partial}{\ \partial y^k}
\end{equation}
for the canonical horizontal distribution. The first and the second Cartan tensors are related as  
\begin{equation}
\label{cartantensors}
\mathcal{C}'=\frac{1}{2}F\mathcal{C}
\end{equation}
and the curvature of the canonical horizontal distribution can be expressed in the following form 
\begin{equation}
R\left(\frac{\partial}{\ \partial x^i},\frac{\partial}{\ \partial x^i}\right)=\frac{F}{4}\left(\frac{\partial F}{\ \partial y^i}\frac{\partial}{\ \partial y^j}-\frac{\partial F}{\ \partial y^j}\frac{\partial}{\ \partial y^i}\right).
\end{equation}
In terms of lifted vector fields
\begin{equation}
R(X^c,Y^c)=\frac{1}{4}\bigg{(}g(X^v,C)Y^v-g(Y^v,C)X^v\bigg{)}.
\end{equation}
\noindent
Using relation (\ref{cartantensors}) we can specialize the basic formula (\ref{basicformula}) for derivatives of average-valued functions:
\begin{equation}
\label{specialder}
X_pA_f=\int_{\partial K_p} X^h f+\frac{F}{2}f\tilde{\mathcal{C}}(X^c)\, \mu_p.
\end{equation} 

\section{Analytic properties of the area function of a Funk metric}

In what follows we apply formula (\ref{specialder}) in the special case of the area function
\begin{equation}
r\colon U\to \mathbb{R}, \ \ p\mapsto r(p):=A_1(p)=\int_{\partial K_p} 1\, \mu_p.
\end{equation}

At first we are going to investigate the partial derivatives at the points of the interior of $K$. 

\begin{Thm}
For any $p\in \textrm{int}\ K$ 
\begin{equation}
\label{partial}
\frac{\partial^{m} r}{\partial u^{1 i_1}\ldots \partial u^{n i_n}}_p=c_m\int_{{\partial}_p K}\left(\frac{\partial F}{\partial y^1}\right)^{i_1}\cdot \ldots \cdot \left(\frac{\partial F}{\partial y^n} \right)^{i_n}\, \mu_p,
\end{equation}
where 
$$c_0:=1,\ \ c_m:=\frac{(n-1)(n+1)\cdot \ldots \cdot ((n-1)+2(m-1))}{2^m},$$
$0\leq i_1, \ldots, 0\leq i_n$ and $m=i_1+\ldots+i_n$. 
\end{Thm}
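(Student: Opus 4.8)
The plan is to argue by induction on the order $m=i_1+\ldots+i_n$. It is cleanest to prove the equivalent symmetric statement that for any indices $k_1,\ldots,k_m$,
\[
\frac{\partial^{m} r}{\partial u^{k_1}\ldots\partial u^{k_m}}_{p}=c_m\int_{\partial K_p}\frac{\partial F}{\partial y^{k_1}}\cdot\ldots\cdot\frac{\partial F}{\partial y^{k_m}}\,\mu_p ,
\]
from which (\ref{partial}) — and, incidentally, the commutativity of the mixed partials — follows; the case $m=0$ is the definition of $r$ together with $c_0=1$. For the inductive step the key is the identity, valid whenever $f$ is a product of $m$ factors of the form $\partial F/\partial y^j$ (such an $f$ being positively homogeneous of degree zero): with $p_k:=\partial F/\partial y^k$,
\[
\frac{\partial A_f}{\partial u^k}_{p}=\frac{(n-1)+2m}{2}\int_{\partial K_p}p_k\,f\,\mu_p .
\]
Since $p_k f$ is then a product of $m+1$ such factors and $c_{m+1}=\frac{(n-1)+2m}{2}\,c_m$, differentiating the order-$m$ formula once more and applying this identity advances the induction. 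Regularity does not obstruct the iteration: at every step the integrand is a polynomial in the \emph{first} $y$-derivatives of $F$, hence of class $\mathcal{C}^1$ on $TU$ — Okada's relation (\ref{okada}) transfers the $\mathcal{C}^4$ regularity of $F$ in the $y$-variables to the $x$-variables — so that (\ref{specialder}) is applicable afresh each time.

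Two computations yield the displayed identity. The first is a fibrewise divergence identity on a single tangent space $T_pU$, on which $F$ restricts to the Minkowski functional $L_p$. Imitating the derivation of (\ref{perturb}), I would apply the divergence theorem on $K_p$ to the vector field
\[
Z=f\,F\,\frac{\partial}{\partial y^k}-f\,p_k\,C ,
\]
which is tangential to the indicatrix $\partial K_p$ because $ZF=fFp_k-fp_k(CF)=0$, using $CF=F$. Hence $\int_{K_p}\textrm{div}\,Z\,d\mu_p=0$. Now $\textrm{div}(f p_k C)=n\,f p_k$ since $f p_k$ is homogeneous of degree zero, while a direct computation with $\frac{1}{2}g^{lm}\partial g_{lm}/\partial y^k=g^{lm}\mathcal{C}_{lmk}$ gives
\[
\textrm{div}\Big(f F\,\frac{\partial}{\partial y^k}\Big)=F\,\frac{\partial f}{\partial y^k}+f p_k+f F\, g^{lm}\mathcal{C}_{lmk};
\]
subtracting and passing from $K_p$ to $\partial K_p$ by (\ref{sec}) yields
\[
\int_{\partial K_p}\Big(F\,\frac{\partial f}{\partial y^k}+f F\, g^{lm}\mathcal{C}_{lmk}\Big)\mu_p=(n-1)\int_{\partial K_p}p_k\,f\,\mu_p .
\]
For $f=1$ this is precisely (\ref{perturb}) after contraction with $v^k$.

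The second computation is the horizontal derivative of a monomial $f=\prod_j(\partial F/\partial y^j)^{i_j}$. From the canonical horizontal field (\ref{canhordistr}) and $y^l\,\partial f/\partial y^l=0$ (Euler) one has $X_k^h f=\partial f/\partial x^k-\frac{F}{2}\,\partial f/\partial y^k$. Differentiating Okada's relation $\partial F/\partial x^k=F\,\partial F/\partial y^k$ with respect to $y^j$ and comparing with $g_{jk}=p_jp_k+F\,\partial^2F/\partial y^j\partial y^k$ gives the clean relation $\partial p_j/\partial x^k=g_{jk}$; inserting it into the Leibniz rule and using $F\,\partial^2F/\partial y^j\partial y^k=g_{jk}-p_jp_k$ once more produces $\partial f/\partial x^k=m\,p_kf+F\,\partial f/\partial y^k$, so that
\[
X_k^h f=m\,p_k\,f+\frac{F}{2}\,\frac{\partial f}{\partial y^k}.
\]
Substituting this into the specialized formula (\ref{specialder}), namely $\frac{\partial A_f}{\partial u^k}_{p}=\int_{\partial K_p}\big(X_k^h f+\frac{F}{2}\,f\,\tilde{\mathcal{C}}(X_k^c)\big)\mu_p$ with $\tilde{\mathcal{C}}(X_k^c)=g^{lm}\mathcal{C}_{lmk}$, splits the integral as $m\int_{\partial K_p}p_kf\,\mu_p+\frac{1}{2}\int_{\partial K_p}\big(F\,\partial f/\partial y^k+fF\,g^{lm}\mathcal{C}_{lmk}\big)\mu_p$; by the divergence identity the second summand equals $\frac{n-1}{2}\int_{\partial K_p}p_kf\,\mu_p$, and adding the two gives $\frac{(n-1)+2m}{2}\int_{\partial K_p}p_kf\,\mu_p$, the required recursion. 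The only real difficulty is bookkeeping rather than conceptual: carrying the degree-zero (Euler) cancellations and the repeated substitution $g_{jk}=p_jp_k+F\,\partial^2F/\partial y^j\partial y^k$ accurately through the Leibniz expansion, and confirming that reading $F$ fibrewise as $L_p$ legitimately makes the Minkowski-space identities (\ref{sec}) and (\ref{perturb}) available point by point.
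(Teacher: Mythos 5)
Your proposal is correct and follows essentially the same route as the paper: induction on $m$ via formula (\ref{specialder}), the differentiated Okada relation to compute the horizontal derivative of the monomial, and the divergence theorem applied to the tangential field $fF\,\partial/\partial y^k-f\,\frac{\partial F}{\partial y^k}\,C$ (the paper's $T_k$) to absorb the Cartan-tensor term. Your packaging of the Leibniz sum as $F\,\partial f/\partial y^k$ and the observation $\partial p_j/\partial x^k=g_{jk}$ are tidier than the paper's expanded products, but the argument is the same.
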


The proof is based on the induction with respect to $m$. In case of $m=1$ formula (\ref{specialder}) shows that
\begin{equation}
\frac{\partial r}{\partial u^i}_p =\frac{1}{2}\int_{\partial K_p} F\tilde{\mathcal{C}}\left(\frac{\partial}{\partial x^i}\right)\, \mu_p=\frac{n-1}{2}\int_{\partial K_p} \frac{\partial F}{\partial y^i}\, \mu_p
\end{equation}
because of subsection 1.2/II, formula (\ref{perturb}). In other words the formula implies that
$$d r=\frac{n-1}{2}\beta$$
in case of a Funk manifold \cite{Vincze3}. The second order partial derivatives of the area function coincide the associated Riemannian metric $\gamma_3$ up to a constant proportional term:
\begin{equation}
\label{second}
\frac{\partial^2 r}{\partial u^j \partial u^i}_p=\frac{n^2-1}{4}\int_{\partial K_p} \frac{\partial F}{\partial y^i}\frac{\partial F}{\partial y^j}\,\mu_p;
\end{equation}
for the details see \cite{Vincze3}. 

\begin{Cor} The area function is strictly convex.
\end{Cor}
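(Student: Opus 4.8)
The plan is to deduce strict convexity of $r$ directly from the second-order formula (\ref{second}): it identifies the Hessian of $r$ at an arbitrary interior point $p$ with the associated tensor $\gamma_3$ up to the positive factor $(n^2-1)/4$, and once this Hessian is known to be positive definite everywhere on the convex open set $U=\mathrm{int}\,K$, strict convexity follows from the usual second-derivative criterion (bearing in mind that $r$ is at least of class $\mathcal{C}^2$, in fact $\mathcal{C}^\infty$, by formula (\ref{partial})).

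So the real task is positive definiteness. I would fix $p\in\mathrm{int}\,K$ and $\xi=(\xi^1,\dots,\xi^n)\in\mathbb{R}^n$, and contract (\ref{second}) twice with $\xi$ to get
$$\xi^i\xi^j\frac{\partial^2 r}{\partial u^j\partial u^i}_p=\frac{n^2-1}{4}\int_{\partial K_p}\Big(\xi^i\frac{\partial F}{\partial y^i}\Big)^2\mu_p\ \geq\ 0,$$
which already yields convexity. Assume now that this quantity vanishes. Since $F$ restricted to the tangent space at $p$ is the Minkowski functional $L_p$, which is smooth away from the origin, the integrand is continuous and nonnegative on the compact hypersurface $\partial K_p$, hence vanishes identically there; that is, $\xi^i\,\partial F/\partial y^i=0$ on $\partial K_p$. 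As $\xi^i\,\partial F/\partial y^i$ is positively homogeneous of degree $0$ in $y$, it then vanishes on the whole punctured tangent space, and multiplying by $F$ and using $F\,\partial F/\partial y^i=\partial E/\partial y^i$ gives $\xi^i y^j g_{ij}(y)=0$ for all $y\neq 0$. If $\xi\neq 0$ we may take $y=\xi$, obtaining $\xi^i\xi^j g_{ij}(\xi)=0$, which contradicts the positive definiteness of $g_{ij}$ built into the definition of a Finsler--Minkowski functional. Hence $\xi=0$, so the Hessian of $r$ at $p$ is positive definite.

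Since $p$ was arbitrary, $r$ has positive definite Hessian throughout $U$ and is therefore strictly convex. The one step that requires care is the implication $\gamma_3(\xi,\xi)=0\Rightarrow\xi=0$: a single rank-one form $(\xi^i\partial F/\partial y^i)^2$ is of course degenerate, so one genuinely needs both the integration over all directions and the pointwise positive definiteness of $g$ together; the remaining manipulations are just bookkeeping with Euler's identities for the homogeneous functions $F$ and $E$.
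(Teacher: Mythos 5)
Your argument is correct and follows the same route as the paper, which deduces strict convexity from formula (\ref{second}) identifying the Hessian of $r$ with $\frac{n^2-1}{4}\gamma_3$ and the positive definiteness of $\gamma_3$ (deferring the details to the cited reference); your verification that $\gamma_3(\xi,\xi)=0$ forces $\xi=0$ is sound. As a small remark, that last step can be shortened: by Euler's theorem $\xi^i\frac{\partial F}{\partial y^i}$ evaluated at $y=\xi/L_p(\xi)\in\partial K_p$ equals $1$, so for $\xi\neq 0$ the continuous nonnegative integrand cannot vanish identically, without needing to pass through $g_{ij}$.
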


\begin{Cor} The body $K$ is balanced if and only if the area function of the associated Funk space has a global minimizer at the origin.
\end{Cor}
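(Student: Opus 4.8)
The plan is to read the statement off the first-order derivative formula for $r$ combined with the strict convexity established in the previous Corollary. First I would recall that the case $m=1$ of the preceding Theorem --- equivalently formula (\ref{perturb}) --- yields
$$dr_p=\frac{n-1}{2}\,\beta_p,\qquad \beta_p(X_p)=\int_{\partial K_p}X^vF\,\mu_p,$$
for every $p\in\textrm{int}\,K$. Evaluating at the base point $p=\mathbf{0}$ and using $F(v_{\mathbf{0}})=L(v)$ together with $\partial L/\partial u^i=\partial F/\partial y^i$ on $\partial K_{\mathbf{0}}=\partial K$, one sees that $\beta_{\mathbf{0}}$ is precisely the $1$-form $\beta$ occurring in the Definition of a balanced body. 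Moreover, for $m=1$ the integrand in (\ref{partial}) is continuous and $\partial K_p$ is compact, so $r$ is of class $C^1$ on $\textrm{int}\,K$ and $dr_{\mathbf{0}}=\frac{n-1}{2}\beta$ is a genuine identity of $1$-forms.

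Suppose next that $K$ is balanced, i.e. $\beta=0$. Then $dr_{\mathbf{0}}=0$, so the origin is a critical point of $r$. Since $r$ is strictly convex on the convex open set $\textrm{int}\,K$, the first-order characterization of strict convexity gives, for every $q\in\textrm{int}\,K$ with $q\neq\mathbf{0}$,
$$r(q)>r(\mathbf{0})+dr_{\mathbf{0}}(q-\mathbf{0})=r(\mathbf{0});$$
hence $\mathbf{0}$ is the (unique) global minimizer of $r$. Conversely, if $r$ attains a global minimum at $\mathbf{0}$, then --- the origin being an interior point of the domain and $r$ being differentiable there --- Fermat's rule forces $dr_{\mathbf{0}}=0$, whence the displayed identity gives $\beta=\frac{2}{n-1}\,dr_{\mathbf{0}}=0$; that is, $K$ is balanced.

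I do not expect a real obstacle here: once the relation $dr=\frac{n-1}{2}\beta$ and the strict convexity of $r$ are available, the argument is purely formal. The only two points deserving a line of care are that $dr$ genuinely makes sense at the origin --- which is supplied by the integral formula (\ref{partial}) for $m=1$, exhibiting $r$ as a $C^1$ (indeed analytic) function near every interior point --- and the elementary fact that a critical point of a strictly convex differentiable function is automatically its unique global minimum.
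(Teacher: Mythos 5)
Your argument is correct and coincides with the one the paper intends: the corollary is stated immediately after the identity $dr=\frac{n-1}{2}\beta$ and the strict convexity of $r$, and is meant to follow from exactly the combination you spell out (a critical point of a strictly convex function on a convex open set is its unique global minimum, and an interior global minimum forces $dr_{\bf 0}=0$, hence $\beta=0$). No gaps; you have merely made explicit the two lines the paper leaves to the reader.
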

Suppose that (\ref{partial}) is true up to the order $m$. Differentiating again:
{\small{$$\frac{\partial}{\partial u^k}_p\left(\frac{\partial^{m} r}{\partial u^{1 i_1}\ldots \partial u^{n i_n}}\right)\stackrel{(\ref{specialder})}{=}$$
$$c_m\sum_{j=1}^n i_j \int_{{\partial}_p K}\left(\frac{\partial F}{\partial y^1}\right)^{i_1}\cdot \ldots \cdot \left(\frac{\partial F}{\partial y^j}\right)^{i_j-1}\cdot \ldots \cdot \left(\frac{\partial F}{\partial y^n}\right)^{i_n}\cdot \left(\frac{\partial}{\partial u^k}\right)^h \frac{\partial F}{\partial y^{j}}\, \mu_p+$$
$$\frac{c_m}{2}\int_{{\partial}_p K}F \left(\frac{\partial F}{\partial y^1}\right)^{i_1}\cdot \ldots \cdot \left(\frac{\partial F}{\partial y^1}\right)^{i_n} \tilde{\mathcal{C}}\left(\frac{\partial}{\partial x^k}\right)\, \mu_p.$$}}

\noindent
Differentiating Okada's relation (\ref{okada}) it follows that
$$\frac{\partial^2 F}{\partial y^j \partial x^k}=\frac{\partial F}{\partial y^k}\frac{\partial F}{\partial y^j}+F\frac{\partial^2 F}{\partial y^k \partial y^j}$$
and thus (by formula (\ref{canhordistr}) and zero homogenity)
$$\left(\frac{\partial}{\partial u^k}\right)^h \frac{\partial F}{\partial y^j}=\frac{\partial^2 F}{\partial x^k \partial y^j}-\frac{1}{2}F\frac{\partial^2 F}{\partial y^k \partial y^j}-\frac{1}{2}\frac{\partial F}{\partial y^k}y^r\frac{\partial^2 F}{\partial y^r \partial y^j}=$$
$$\frac{\partial F}{\partial y^k}\frac{\partial F}{\partial y^j}+\frac{1}{2}F\frac{\partial^2 F}{\partial y^k \partial y^j}.$$
We have that 
{\small{$$\frac{\partial}{\partial u^k}_p\left(\frac{\partial^{m} r}{\partial u^{1 i_1}\ldots \partial u^{n i_n}}\right)=$$
$$c_m \sum_{j=1}^n i_j\int_{{\partial}_p K}\left(\frac{\partial F}{\partial y^1}\right)^{i_1}\cdot \ldots \cdot \left(\frac{\partial F}{\partial y^j}\right)^{i_j}\cdot \ldots \cdot \left(\frac{\partial F}{\partial y^j}\right)^{i_k+1}\cdot \ldots \cdot \left(\frac{\partial F}{\partial y^n}\right)^{i_n}\, \mu_p+$$
$$\frac{c_m}{2}\sum_{j=1}^n i_j \int_{{\partial}_p K}F\left(\frac{\partial F}{\partial y^1}\right)^{i_1}\cdot \ldots \left(\frac{\partial F}{\partial y^j}\right)^{i_j-1}\cdot \ldots \cdot \left(\frac{\partial F}{\partial y^n}\right)^{i_n}\cdot \frac{\partial^2 F}{\partial y^k \partial y^j}+$$
$$\frac{c_m}{2}\int_{{\partial}_p K}F \left(\frac{\partial F}{\partial y^1}\right)^{i_1}\cdot \ldots \cdot \left(\frac{\partial F}{\partial y^n}\right)^{i_n}  
\tilde{\mathcal{C}}\left(\frac{\partial}{\partial x^k}\right)\, \mu_p.$$}}

\noindent
To substitute the last terms in the integrand. Let us define the vector field
{\small{$$T_k:=F\left(\frac{\partial F}{\partial y^1}\right)^{i_1}\cdot \ldots \cdot \left(\frac{\partial F}{\partial y^n}\right)^{i_n}\frac{\partial}{\partial y^k}-
\left(\frac{\partial F}{\partial y^n}\right)^{i_1}\cdot \ldots \cdot \left(\frac{\partial F}{\partial y^k}\right)^{i_k+1}\cdot \ldots \cdot \left(\frac{\partial F}{\partial y^1}\right)^{i_n}C.$$}}

\noindent
Since $T_kF=0$ the divergence theorem says that
\begin{equation}
\label{div}
\int_{\partial_p K}\ \textrm{div}\ T_k\, \mu_p=0,
\end{equation}
where
$$\textrm{div}\ T_k=F\left(\frac{\partial F}{\partial y^1}\right)^{i_1}\cdot \ldots \cdot \left(\frac{\partial F}{\partial y^n}\right)^{i_n}  
\tilde{\mathcal{C}}\left(\frac{\partial}{\partial x^k}\right)+$$
$$\left(\frac{\partial F}{\partial y^1}\right)^{i_1}\cdot \ldots \cdot \left(\frac{\partial F}{\partial y^k}\right)^{i_k+1}\cdot \ldots \cdot \left(\frac{\partial F}{\partial y^n}\right)^{i_n}+$$
$$F \sum_{j=1}^n i_j\left(\frac{\partial F}{\partial y^1}\right)^{i_1}\cdot \ldots \cdot \left(\frac{\partial F}{\partial y^j}\right)^{i_j-1}\cdot \ldots \cdot \left(\frac{\partial F}{\partial y^n}\right)^{i_n}\cdot \frac{\partial^2 F}{\partial y^k \partial y^j}-$$
$$n\left(\frac{\partial F}{\partial y^1}\right)^{i_1}\cdot \ldots \cdot \left(\frac{\partial F}{\partial y^k}\right)^{i_k+1}\cdot \ldots \cdot \left(\frac{\partial F}{\partial y^n}\right)^{i_n}$$
because of the zero homogenity:
$$C\left( \left(\frac{\partial F}{\partial y^1}\right)^{i_1}\cdot \ldots \cdot \left(\frac{\partial F}{\partial y^k}\right)^{i_k+1}\cdot \ldots \cdot \left(\frac{\partial F}{\partial y^1}\right)^{i_n}\right)=0.$$
Integrating both sides, formula (\ref{div}) gives that 
$$(n-1)\int_{\partial_p K}\left(\frac{\partial F}{\partial y^1}\right)^{i_1}\cdot \ldots \cdot \left(\frac{\partial F}{\partial y^k}\right)^{i_k+1}\cdot \ldots \cdot \left(\frac{\partial F}{\partial y^n}\right)^{i_n}\, \mu_p=$$
$$\sum_{j=1}^n i_j \int_{\partial_p K} F\left(\frac{\partial F}{\partial y^1}\right)^{i_1}\cdot \ldots \cdot \left(\frac{\partial F}{\partial y^j}\right)^{i_j-1}\cdot \ldots \cdot \left(\frac{\partial F}{\partial y^n}\right)^{i_n}\cdot \frac{\partial^2 F}{\partial y^k \partial y^j}\, \mu_p+$$
$$\int_{\partial_p K} F\left(\frac{\partial F}{\partial y^1}\right)^{i_1}\cdot \ldots \cdot \left(\frac{\partial F}{\partial y^n}\right)^{i_n} 
\tilde{\mathcal{C}}\left(\frac{\partial}{\partial x^k}\right)\, \mu_p.$$
Therefore
$$\frac{\partial}{\partial u^k}_p\left(\frac{\partial^{m} r}{\partial u^{1 i_1}\ldots \partial u^{n i_n}}\right)=$$
$$c_m\left(\sum_{j=1}^n i_j+\frac{n-1}{2}\right)\int_{\partial_p K}\left(\frac{\partial F}{\partial y^1}\right)^{i_1}\cdot \ldots \cdot \left(\frac{\partial F}{\partial y^k}\right)^{i_k+1}\cdot \ldots \cdot \left(\frac{\partial F}{\partial y^n}\right)^{i_n}\, \mu_p=$$
$$c_m\frac{2m+n-1}{2}\int_{\partial_p K}\left(\frac{\partial F}{\partial y^1}\right)^{i_1}\cdot \ldots \cdot \left(\frac{\partial F}{\partial y^k}\right)^{i_k+1}\cdot \ldots \cdot \left(\frac{\partial F}{\partial y^n}\right)^{i_n}\, \mu_p=$$
$$c_m\frac{(n-1)+2(m+1-1)}{2}\int_{\partial_p K}\left(\frac{\partial F}{\partial y^1}\right)^{i_1}\cdot \ldots \left(\frac{\partial F}{\partial y^k}\right)^{i_k+1}\cdot \ldots \cdot \left(\frac{\partial F}{\partial y^n}\right)^{i_n}\, \mu_p=$$
$$c_{m+1}\int_{\partial_p K}\left(\frac{\partial F}{\partial y^1}\right)^{i_1}\cdot \ldots \left(\frac{\partial F}{\partial y^k}\right)^{i_k+1}\cdot \ldots \cdot \left(\frac{\partial F}{\partial y^n}\right)^{i_n}\, \mu_p$$
and the induction is completed. 

To finish this section recall that the projection $\rho$ provides a nice connection between the indicatrix hypersurfaces $\partial K_p$ and $\partial K$. Using Theorem 1 we have
\begin{equation}
\label{volint}
r(p)=\int_{\partial K_p} 1\, \mu_p=\int_{\partial K} \left(1-p^k\frac{\partial L}{\partial u^k}\right)^{-\frac{n-1}{2}}\, \mu,
\end{equation}
where $\mu=\mu_{\bf{0}}$ is the canonical volume form associated to $\partial K$. From the general theory of Minkowski functionals \cite{BCS}
\begin{equation}
\label{geomseries}
w^k\frac{\partial L}{\partial u^k}_v \leq L(w)\ \Rightarrow \ -L(-p) \leq  p^k\frac{\partial L}{\partial u^k}_v \leq L(p) < 1
\end{equation}
for any nonzero element $v$. 

\begin{Thm} The function $r$ is analytic at the origin in the sense that
\begin{equation}
\label{analytic}
r(p)=r({\bf{0}})+\sum_{m=1}^{\infty} \ \sum_{i_1+ \ldots+ i_n=m} \frac{1}{i_1!\cdot \ldots \cdot i_n!} \frac{\partial^{m} r}{\partial u^{1 i_1}\ldots \partial u^{n i_n}}_{\bf{0}}p_1^{i_1}\cdot \ldots \cdot p_n^{i_n}
\end{equation}
for any point $p$ in the interior of $K \cap (- K)$. Especially if $K$ is symmetric about the origin then the area function is analytic in the interior of $K$.
\end{Thm}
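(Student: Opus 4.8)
The plan is to expand the integrand in the representation (\ref{volint}) of $r$ as a power series in $p$ and integrate term by term. Fix $p$ in the interior of $K\cap(-K)$; then $L(p)<1$ and $L(-p)<1$. Applying the estimate (\ref{geomseries}) with $w=p$ and with $w=-p$ gives, for every nonzero $v$,
$$-L(-p)\le p^k\frac{\partial L}{\partial u^k}_v\le L(p),$$
so that $t:=p^k\frac{\partial L}{\partial u^k}$, viewed as a function on $\partial K$, is bounded by $q:=\max\{L(p),L(-p)\}<1$; this bound is uniform on $\partial K$ because $t$ is positively homogeneous of degree zero in $v$ and $\partial K$ is compact (and avoids the origin, where $L$ fails to be differentiable). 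Hence the binomial series
$$(1-t)^{-\frac{n-1}{2}}=\sum_{m=0}^{\infty}\frac{c_m}{m!}\,t^m,\qquad c_0=1,\ \ c_m=\frac{(n-1)(n+1)\cdots((n-1)+2(m-1))}{2^m},$$
converges absolutely and uniformly on $\partial K$; note that its $m$-th coefficient is exactly $c_m/m!$ with $c_m$ as in Theorem 2, since $c_m=\alpha(\alpha+1)\cdots(\alpha+m-1)$ for $\alpha=(n-1)/2$.

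Next I would expand $t^m=\big(\sum_{k=1}^n p_k\frac{\partial L}{\partial u^k}\big)^m$ by the multinomial theorem and interchange summation and integration, which is legitimate because $\partial K$ is compact, $\mu$ has finite total mass, and the series converges uniformly there. Using the identity $\frac{\partial L}{\partial u^k}_v=\frac{\partial F}{\partial y^k}_{v_{\bf 0}}$ this yields
$$r(p)=\sum_{m=0}^{\infty}\ \sum_{i_1+\ldots+i_n=m}\frac{c_m}{i_1!\cdots i_n!}\,p_1^{i_1}\cdots p_n^{i_n}\int_{\partial K}\left(\frac{\partial F}{\partial y^1}\right)^{i_1}\cdots\left(\frac{\partial F}{\partial y^n}\right)^{i_n}\mu.$$
Finally, since $\partial_{\bf 0}K=\partial K$ and $\mu_{\bf 0}=\mu$, Theorem 2 identifies $c_m\int_{\partial K}(\partial F/\partial y^1)^{i_1}\cdots(\partial F/\partial y^n)^{i_n}\,\mu$ with $\frac{\partial^m r}{\partial u^{1 i_1}\cdots\partial u^{n i_n}}_{\bf 0}$ for $m\ge 1$, and with $r({\bf 0})$ for $m=0$; separating off the $m=0$ term gives precisely (\ref{analytic}). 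If $K=-K$ then $\textrm{int}\,(K\cap(-K))=\textrm{int}\,K$, which gives the last assertion.

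The only genuinely non-formal point is the uniformity of the estimate $|t|\le q<1$ in the fibre variable $v$: this is exactly what forces the hypothesis $p\in\textrm{int}\,(K\cap(-K))$ rather than merely $p\in\textrm{int}\,K$, for without symmetry (\ref{geomseries}) controls $p^k\partial L/\partial u^k_v$ only from above by $L(p)<1$, whereas the binomial series also requires $t>-1$, i.e. the lower bound $t\ge-L(-p)$ with $L(-p)<1$. Everything else — the binomial and multinomial expansions, the term-by-term integration, and the bookkeeping matching the coefficients to the derivatives of Theorem 2 — is routine.
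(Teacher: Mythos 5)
Your proposal is correct and follows essentially the same route as the paper: expand the integrand of (\ref{volint}) in a binomial series with coefficients $c_m/m!$ (the paper derives these by inductively differentiating $f_0(x)=(1-x)^{-(n-1)/2}$, you identify them as the rising factorial $\alpha(\alpha+1)\cdots(\alpha+m-1)$ with $\alpha=(n-1)/2$ --- the same numbers), use the multinomial theorem, justify term-by-term integration via the uniform bound $-L(-p)\le p^k\partial L/\partial u^k\le L(p)<1$ from (\ref{geomseries}) on $\textrm{int}\,(K\cap(-K))$, and match the resulting coefficients with the derivative formula of Theorem 2 at the origin. Your closing remark correctly isolates the one non-formal point, namely that the two-sided bound on $p^k\partial L/\partial u^k$ is exactly what the hypothesis $p\in\textrm{int}\,(K\cap(-K))$ provides.
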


\begin{proof} Consider the function
$$f_0(x)=\left(\frac{1}{1-x}\right)^{\frac{n-1}{2}};$$
since
$$f_1(x):=f_0'(x)=\frac{n-1}{2}\frac{f_0(x)}{1-x}$$
a simple induction shows that
$$f_m(x):=f_0^m(x)=c_m\frac{f_0(x)}{\ (1-x)^m},$$
where
$$c_m=\frac{(n-1)(n+1)\cdot \ldots \cdot ((n-1)+2(m-1))}{2^m}.$$
According to the convergence radius $|x|<1$ we have that
$$f(x)=1+\sum_{m=1}^{\infty}\frac{c_m}{\ m!}x^m.$$
From the polynomial theorem 
$$\left(p^k\frac{\partial L}{\partial u^k}\right)_v^m=\sum_{i_1+ \ldots+ i_n=m} \frac{m!}{i_1!\cdot \ldots \cdot i_n!} \left(\frac{\partial L}{\partial u^1} \right)^{i_1}_v\cdot \ldots \cdot \left(\frac{\partial L}{\partial u^n}\right)^{i_n}_v p_1^{i_1}\cdot \ldots \cdot p_n^{i_n}$$
and, consequently, for any $v\in \partial K$ we have that
$$\left(1-p^k\frac{\partial L}{\partial u^k}\right)_v^{-\frac{n-1}{2}}=$$
$$1+\sum_{m=1}^{\infty} c_m \sum_{i_1+ \ldots+ i_n=m}\frac{1}{i_1!\cdot \ldots \cdot i_n!} \left(\frac{\partial L}{\partial u^1} \right)^{i_1}_v\cdot \ldots \cdot \left(\frac{\partial L}{\partial u^n}\right)^{i_n}_v p_1^{i_1}\cdot \ldots \cdot p_n^{i_n}$$
provided that $p$ is close enough to the origin. Especially if $p$ is in the interior of $K \cap (- K)$ then (\ref{geomseries}) implies that 
$$-1 < p^k\frac{\partial L}{\partial u^k}_v < 1\ \ \ \  (v\in \partial K)$$
and the convergence is uniform. Therefore
we can integrate the series member by member:
{\small{$$r(p)\stackrel{(\ref{volint})}{=}\int_{\partial K} \left(1-p^k\frac{\partial L}{\partial u^k}\right)^{-\frac{n-1}{2}}\, \mu=r({\bf{0}})+$$
$$\sum_{m=1}^{\infty} c_m \sum_{i_1+ \ldots+ i_n=m}\frac{1}{i_1!\cdot \ldots \cdot i_n!} \int_{\partial K}\left(\frac{\partial L}{\partial u^1}\right)^{i_1}\cdot \ldots \cdot \left(\frac{\partial L}{\partial u^n}\right)^{i_n}\, \mu \ p_1^{i_1}\cdot \ldots \cdot p_n^{i_n}=$$
$$r({\bf{0}})+\sum_{m=1}^{\infty} \sum_{i_1+ \ldots+ i_n=m}\frac{1}{i_1!\cdot \ldots \cdot i_n!}  \frac{\partial^{m} r}{\partial u^{1 i_1}\ldots \partial u^{n i_n}}_{\bf{0}} p_1^{i_1}\cdot \ldots \cdot p_n^{i_n}$$}}

\noindent
because of formula (\ref{partial}) for $p={\bf{0}}$.
\end{proof}

\begin{Rem} \emph{Since we can choose any interior point $p_0$ of $K$ as the origin, a similar formula holds in the interior of the intersection of $K_{p_0}$ and $-K_{p_{0}}$, where the $-$ operator means the reflection about the "origin" $p_{0}$. Therefore}
{\small{
$$r(p)=r(p_{0})+$$
$$\sum_{m=1}^{\infty} \ \sum_{i_1+ \ldots+ i_n=m} \frac{1}{i_1!\cdot \ldots \cdot i_n!} \frac{\partial^{m} r}{\partial u^{1 i_1}\ldots \partial u^{n i_n}}_{p_{0}}(p_1-p_{01})^{i_1}\cdot \ldots \cdot (p_n-p_{0n})^{i_n}.$$}}
\end{Rem}

\section{The asymptotic behavior of the area function of a Funk metric}

In what follows we prove that the area can be arbitrarily large near to the boundary of $K$.

\begin{Thm} $$\lim_{L(p)\to 1^-}r(p)=\infty, $$
i.e. for any real number $M$ there is an $\varepsilon > 0$ such that $L(p)>1-\varepsilon$ implies that $r(p)>M$. 
\end{Thm}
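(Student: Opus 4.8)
The plan is to work directly with the integral representation (\ref{volint}), $r(p)=\int_{\partial K}\big(1-p^k\frac{\partial L}{\partial u^k}\big)^{-\frac{n-1}{2}}\,\mu$, and to show that as $L(p)\to 1^-$ the integrand develops a non-integrable singularity on $\partial K$. Put $w_0:=p/L(p)\in\partial K$ and $\varepsilon:=1-L(p)$, so that $w_0-p=\varepsilon\,w_0$. Euler's identity gives $v^k\frac{\partial L}{\partial u^k}_v=L(v)=1$ for $v\in\partial K$, and the Euclidean gradient $\nabla L(v)$, being normal to the level set $\partial K=L^{-1}(1)$, equals $\nu(v)/\langle v,\nu(v)\rangle$, where $\nu(v)$ is the outward unit normal of $K$ at $v$. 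Hence
$$1-p^k\frac{\partial L}{\partial u^k}_v=\langle v-p,\nabla L(v)\rangle=\frac{\langle v-w_0,\nu(v)\rangle+\varepsilon\,\langle w_0,\nu(v)\rangle}{\langle v,\nu(v)\rangle};$$
its value at $v=w_0$ is exactly $\varepsilon\to 0$, which is the point responsible for the blow-up.

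Next I would convert this into a one-sided estimate valid uniformly near $w_0$. By convexity $\langle v-w_0,\nu(v)\rangle\ge 0$, and since $\partial K$ is compact and of class $C^2$ there are constants $A>0$ and $\delta_0>0$, \emph{independent of} $w_0$, with $\langle v-w_0,\nu(v)\rangle\le A\|v-w_0\|^2$ whenever $\|v-w_0\|<\delta_0$; moreover $\langle w_0,\nu(v)\rangle\le B$ (circumradius bound) and $\langle v,\nu(v)\rangle\ge d_0>0$ since the origin is interior to $K$. Thus on $\partial K\cap B(w_0,\delta_0)$ the base of the integrand is at most $(A\|v-w_0\|^2+B\varepsilon)/d_0$. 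Since the restriction of $\mu$ to $\partial K$ is $\sqrt{\det g_{ij}}\,\langle v,\nu(v)\rangle\,dS$ with $dS$ Euclidean surface measure, and both factors are bounded between positive constants on the compact $\partial K$, we get
$$r(p)\ \ge\ c_1\int_{\partial K\cap B(w_0,\delta_0)}\big(A\|v-w_0\|^2+B\varepsilon\big)^{-\frac{n-1}{2}}\,dS(v)$$
for some $c_1>0$ independent of $p$.

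Finally I would estimate this integral. Writing $\partial K$ near $w_0$ as a graph over the tangent hyperplane with uniform $C^2$ bounds, $\|v-w_0\|$ is comparable to the in-plane radius $\rho$ and $dS$ dominates the flat measure, so the right-hand side is bounded below by a constant times $\int_0^{r_1}(A'\rho^2+B\varepsilon)^{-\frac{n-1}{2}}\rho^{n-2}\,d\rho$. The substitution $\rho=\sqrt\varepsilon\,t$ cancels the $\varepsilon$-powers and leaves $\int_0^{r_1/\sqrt\varepsilon}(A't^2+B)^{-\frac{n-1}{2}}t^{n-2}\,dt$, whose integrand behaves like $t^{-1}$ at infinity; hence it grows like $\log(1/\sqrt\varepsilon)$. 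Therefore $r(p)\gtrsim \frac12\log\frac{1}{1-L(p)}\to\infty$, which is the assertion together with the precise divergence rate. (Alternatively one can argue by contradiction: along a sequence $p_j$ with $L(p_j)\to1$ one has $p_j\to w_\infty\in\partial K$ up to a subsequence, the integrands converge pointwise to $\big(1-\langle w_\infty,\nabla L(v)\rangle\big)^{-\frac{n-1}{2}}$, the same quadratic bound shows this limit is not $\mu$-integrable over $\partial K$, and Fatou's lemma forces $\liminf_j r(p_j)=\infty$.)

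The main obstacle is the borderline nature of the scaling: the curvature estimate $\langle v-w_0,\nu(v)\rangle=O(\|v-w_0\|^2)$ is exactly critical, so the limiting integrand is of size $\|v-w_0\|^{-(n-1)}$ on an $(n-1)$-dimensional surface and the divergence is only logarithmic; one must carefully balance the shrinking region $\|v-w_0\|\lesssim\sqrt\varepsilon$, where the integrand has size $\varepsilon^{-(n-1)/2}$, against its larger values nearer $w_0$. A secondary technical point is that the geometric constants ($A$, $\delta_0$, $d_0$, $B$, the graph bounds, the comparability of $\mu$ with $dS$) must be taken uniform in $w_0\in\partial K$, which is exactly where compactness and the $C^2$ (indeed $C^4$) regularity of the indicatrix enter. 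Throughout $n\ge 2$ is assumed; for $n=1$ the exponent vanishes and $r$ is constant.
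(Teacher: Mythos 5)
Your argument is correct in outline and reaches the theorem by a genuinely different route than the paper. Both proofs start from the same representation $r(p)=\int_{\partial K}\bigl(1-p^k\tfrac{\partial L}{\partial u^k}\bigr)^{-\frac{n-1}{2}}\mu$, but the paper then transfers the integral to the Euclidean sphere $\partial B$ of the osculating inner product $g_{p/L(p)}$ (absorbing the conformal factor $\varphi^n$ and the density $\sqrt{\det g_{ij}}$ into uniform constants $k_1,k_2$), introduces polar coordinates with axis $p/L(p)$, and reduces everything to the one-variable integral $\int_{t_0}^{t_1}dt/\cos t$: its Lemma 1 (monotonicity of $w_u(t)=\partial L/\partial u^n$ along the polar angle) produces the window $[t_0,t_1]$ on which $1-L(p)w_u<1-w_u^2$, and its Lemma 2 identifies $\lim_{t\to\pi/2^-}(1-w_u^2)/\cos^2 t$ with the angular metric evaluated on a unit tangent vector, which is the curvature input. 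You instead stay on $\partial K$ itself, rewrite $1-p^k\partial_k L(v)$ via Euler's identity as $\langle v-p,\nu(v)\rangle/\langle v,\nu(v)\rangle$, and use the elementary convex-geometry bound $0\le\langle v-w_0,\nu(v)\rangle\le A\|v-w_0\|^2$ together with the scaling $\rho=\sqrt{\varepsilon}\,t$; your quadratic support estimate plays exactly the role of the paper's Lemma 2, and your restriction to the shell $\|v-w_0\|\lesssim$ const.\ plays the role of the window $[t_0,t_1]$. Your version uses less Finsler machinery (only that $\mu$ is comparable to Euclidean surface measure and that $\partial K$ is uniformly $C^2$, both guaranteed by the $C^4$ hypothesis and compactness) and it delivers the explicit rate $r(p)\gtrsim\log\frac{1}{1-L(p)}$, which the paper's $\ln\tan(\frac{t_1}{2}+\frac{\pi}{4})$ bound contains only implicitly; the paper's version, by working in the metric $g_{p/L(p)}$, keeps the computation tied to the Finslerian angular metric and fits the conformality framework of its Theorem 1. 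The details you flag as needing care (uniformity of $A$, $\delta_0$, $d_0$ in $w_0\in\partial K$, and the comparability $\mu\asymp dS$) are exactly the points where the paper likewise invokes continuity and compactness, so nothing essential is missing.
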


Let $B$ be the unit ball with respect to the Euclidean inner product
$$\langle v,w \rangle=g_{p/L(p)}(v,w),\ \ \ |v|:=\sqrt{g_{p/L(p)}(v,w)}.$$
Under these notations
\begin{equation}
\label{volint1}
r(p)\stackrel{(24)}{=}\int_{\partial K} \left(1-p^k\frac{\partial L}{\partial u^k}\right)^{-\frac{n-1}{2}}\, \mu=\int_{\partial B} \varphi^n \left(1-p^k\frac{\partial L}{\partial u^k}\right)^{-\frac{n-1}{2}}\, \mu,
\end{equation}
where $\varphi(v)=|v|/L(v)$, see formula (4). Since $\varphi$ is zero homogeneous (i.e. it is constant along the rays emanating from the origin) the global minimum $k_p$ is attained at a point of $\partial K$. By continuity properties 
$$\frac{\sqrt{g_{q/L(q)}(v,v)}}{L(v)}> \frac{k_p}{2}$$
for any $q\in U_{p/L(p)}\cap \partial K$, where $U_{p/L(p)}$ is an open neighbourhood of the Finslerian unit vector $p/L(p)$; see the shaded region around $p/L(p)$ in Figure 2. By compactness we have a finite covering of $\partial K$ by the sets  $U_{p_1/L(p_1)}, \ldots, U_{p_m/L(p_m)}$ and, consequently,
$$k_1:=\min \{k_{p_1}, \ldots, k_{p_m}\}$$
is a positive constant such that $\varphi(v)>k_1$ independently of the choice  $p\neq {\bf{0}}$. Therefore
$$r(p) > k_1^n\int_{\partial B} \left(1-p^k\frac{\partial L}{\partial u^k}\right)^{-\frac{n-1}{2}}\, \mu.$$
Since
$$p^k\frac{\partial L}{\partial u^k}=\frac{\partial L}{\partial {\bf{n}}},$$
where $\partial/\partial {\bf{n}}$ means the directional derivative along the unit vector $e_n=p/L(p)$,
we are motivated to use an $g_{p/L(p)}$ - orthonormal basis $e_1, \ldots, e_{n-1},e_n$ for differentiation, i.e.
$$u^1(p)= \ldots =u^{n-1}(p)=0,\ \ u^n(p)=L(p)\ \ \textrm{and}\ \ \frac{\partial L}{\partial u^n}=\frac{\partial L}{\partial {\bf{n}}}.$$
The parameterization of $\partial B$ can be given as
$$H\times \left [ -\frac{\pi}{2}, \frac{\pi}{2}\right ] \mapsto \mathbb{R}^n,\ \ (u, t)\mapsto (\sigma(u)\cos t, \sin t),$$
where $\sigma \colon H\to \mathbb{R}^{n-1}$ is the parameterization of the intersection of $\partial B$ with the hyperplane $u^n=0$. Therefore
{\small{$$\int_{\partial B} \left(1-p^k\frac{\partial L}{\partial u^k}\right)^{-\frac{n-1}{2}}\, \mu=$$
$$\int_{H} u\mapsto \left(\int_{-\frac{\pi}{2}}^{\ \frac{\pi}{2}}
\left(1-L(p)\frac{\partial L}{\partial u^n}\right)_{(\sigma(u)\cos t, \sin t)}^{-\frac{n-1}{2}}\cos^{n-2}t\cdot \sqrt{\det g_{ij}}_{(\sigma(u)\cos t, \sin t)}\, dt\right)\, d\sigma,$$}}
where
$$g_{ij}=g\left(\frac{\partial}{\partial u^i}, \frac{\partial}{\partial u^j}\right).$$
Since $\sqrt{\det g_{ij}}$ is homogeneous of degree zero we have a positive constant $k_2$ such that
$$\sqrt{\det g_{ij}}_v \geq k_2\ \ \ (v\in \partial K).$$
In a similar way as above, continuity and compactness provides us to choose $k_2$ independently of $p\neq {\bf{0}}$. Therefore
$$r(p) > k_1^n\cdot k_2 \int_{H} u\mapsto \ \left(\int_{-\frac{\pi}{2}}^{\ \frac{\pi}{2}}
\left(1-L(p)\frac{\partial L}{\partial u^n}\right)_{(\sigma(u)\cos t, \sin t)}^{-\frac{n-1}{2}}\cos^{n-2}t\, dt\right)\, d\sigma.$$
In what follows we are going to investigate the integral
$$\int_{-\frac{\pi}{2}}^{\ \frac{\pi}{2}}
\left(1-L(p)\frac{\partial L}{\partial u^n}\right)_{(\sigma(u)\cos t, \sin t)}^{-\frac{n-1}{2}}\cos^{n-2}t\, dt.$$

\begin{Rem} \emph{The investigations include the case of $n=2$. The set $H$ reduces to $\{-1,1\}$ and the integration with respect to $\sigma$ means a summation under the possible values
$\sigma=\pm 1$, i.e.}
{\small{$$\int_{H} u\mapsto \left(\int_{-\frac{\pi}{2}}^{\ \frac{\pi}{2}}
\left(1-L(p)\frac{\partial L}{\partial u^2}\right)_{(\sigma(u)\cos t, \sin t)}^{-\frac{1}{2}}\, dt\right)\, d\sigma=$$}}
{\small{$$\int_{-\frac{\pi}{2}}^{\ \frac{\pi}{2}}
\left(1-L(p)\frac{\partial L}{\partial u^2}\right)_{(\cos t, \sin t)}^{-\frac{1}{2}}\, dt+ \int_{-\frac{\pi}{2}}^{\ \frac{\pi}{2}}
\left(1-L(p)\frac{\partial L}{\partial u^2}\right)_{(-\cos t, \sin t)}^{-\frac{1}{2}}\, dt.$$}}
\end{Rem}

\begin{Lem} For any parameter $u$ the function 
$$w_u(t):=\frac{\partial L}{\partial u^n}_{(\sigma(u)\cos t, \sin t)}$$
is strictly monotone increasing in the variable $t$ and tends to $1$ as $t\to \frac{\pi}{2}^{-}$.
\end{Lem}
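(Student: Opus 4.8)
The plan is to view the curve $t\mapsto v(t):=(\sigma(u)\cos t,\sin t)$ as a great-circle arc for the fixed Euclidean structure $\langle\cdot,\cdot\rangle=g_{p/L(p)}$ and to differentiate $w_u$ along it. Put $\tau:=(\sigma(u),0)$; since $\sigma(u)$ parameterises the Euclidean unit sphere of the hyperplane $u^n=0$, we have $|\tau|=1$ and $\tau\perp e_n$, so that $v(t)=\cos t\,\tau+\sin t\,e_n$ is a Euclidean unit vector, $v'(t)=-\sin t\,\tau+\cos t\,e_n$ is a Euclidean unit vector orthogonal to $v(t)$, and $e_n=\sin t\,v(t)+\cos t\,v'(t)$. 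First I would compute, by the chain rule, $w_u'(t)=\sum_k (v')^k(t)\,\dfrac{\partial^2 L}{\partial u^k\partial u^n}\Big|_{v(t)}$ and rewrite the Hessian of $L$ by the identity obtained from $E=\tfrac12L^2$, namely $\dfrac{\partial^2L}{\partial u^i\partial u^j}=\dfrac1L\Big(g_{ij}-\dfrac{\partial L}{\partial u^i}\dfrac{\partial L}{\partial u^j}\Big)=\dfrac1L\,m_{ij}$, where $m_{ij}=g_{ij}-\tfrac{\partial L}{\partial u^i}\tfrac{\partial L}{\partial u^j}$ are the components of the angular metric tensor $m$. This yields $w_u'(t)=\dfrac{1}{L(v(t))}\,m_{v(t)}\big(v'(t),e_n\big)$.

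Next I would insert the splitting $e_n=\sin t\,v(t)+\cos t\,v'(t)$ and use that the Liouville field lies in the kernel of the angular metric, $m_v(v,\cdot)=0$ — an immediate consequence of $y^ig_{ij}=L\,\partial_jL$ together with Euler's theorem. This removes the $v(t)$-term and leaves
$$ w_u'(t)=\frac{\cos t}{L(v(t))}\,m_{v(t)}\big(v'(t),v'(t)\big). $$
For $t\in(-\tfrac\pi2,\tfrac\pi2)$ one has $\cos t>0$ and $L(v(t))>0$, so it remains to check that $m_{v(t)}(v'(t),v'(t))>0$. The angular metric is positive semidefinite: from $g_v(v,v)=L(v)^2$ (Euler) and the Cauchy--Schwarz inequality in $g_v$ one gets $\big((XL)(v)\big)^2=g_v\!\big(v/L(v),X\big)^2\le g_v(X,X)$, i.e. $m_v(X,X)\ge0$, with equality exactly when $X\in\mathbb{R}v$. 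Since $v'(t)$ is nonzero and Euclidean-orthogonal to $v(t)$, it is not a multiple of $v(t)$; hence $m_{v(t)}(v'(t),v'(t))>0$ and $w_u'(t)>0$ throughout the open interval, and by continuity $w_u$ is strictly increasing on $[-\tfrac\pi2,\tfrac\pi2]$. This argument is dimension-free, so it also covers the case $n=2$ of the preceding remark.

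Finally, as $t\to\tfrac\pi2^-$ we have $v(t)\to e_n$, and since $\partial L/\partial u^n$ is continuous away from the origin, $w_u(t)\to \partial L/\partial u^n|_{e_n}$; as $e_n$ has coordinates $(e_n)^i=\delta_{in}$, Euler's identity $\sum_i y^i\partial_iL=L$ at $e_n$ gives $\partial L/\partial u^n|_{e_n}=L(e_n)=L(p/L(p))=1$. The one point requiring care is bookkeeping of the two metrics: the Hessian identity $\partial_i\partial_jL=\tfrac1L m_{ij}$ concerns the direction-dependent metric $g_v$ at the moving point $v(t)$, whereas the orthogonality $v'(t)\perp v(t)$ that eliminates the obstructing term is taken with respect to the fixed Euclidean structure $g_{p/L(p)}$; keeping these apart is exactly what makes the computation run cleanly.
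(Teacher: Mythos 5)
Your proof is correct, and it is the same computation as the paper's at its core --- differentiate $w_u$ along the arc by the chain rule, use the degree-zero homogeneity of the first derivatives of $L$ (equivalently, that the radial direction lies in the kernel of $\mathrm{Hess}\,L$) to simplify, and invoke Euler's identity for the limit --- but your bookkeeping is genuinely different and in one respect better. The paper stays in the fixed frame $\{\tau,e_n\}$, cancels the mixed second derivative via $y^j\partial_j\partial_i L=0$, and lands on $w_u'(t)=\cos t\,\bigl(\sigma^i\sigma^j\partial_i\partial_j L+\partial_n\partial_n L\bigr)$, whose strict positivity it simply asserts. You instead rewrite $\mathrm{Hess}\,L=\tfrac1L m$ and decompose $e_n=\sin t\,v(t)+\cos t\,v'(t)$ in the moving frame, obtaining $w_u'(t)=\tfrac{\cos t}{L}\,m_{v(t)}(v'(t),v'(t))$; the two expressions agree (both reduce to $\tfrac{1}{\sin^2 t}\,\mathrm{Hess}\,L(\tau,\tau)$ via the kernel relation), but your form isolates the positivity in a single quadratic term, which you then justify by Cauchy--Schwarz in $g_v$: $m_v(X,X)\ge 0$ with equality iff $X\in\mathbb{R}v$, and $v'(t)\notin\mathbb{R}v(t)$. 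That supplies the argument the paper leaves implicit (namely, that $\tau$ and $e_n$ cannot both be radial at the same $t$), so your write-up is the more complete of the two. The treatment of the limit $w_u(t)\to 1$ is identical to the paper's.
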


\begin{proof}
Note that $w_u(t)$ is the last coordinate of the Euclidean gradient which is an outward-pointing unit normal to the indicatrix hypersurface with respect to the Euclidean inner product $g_{p/L(p)}$. By differentiation
$$w_u'(t)=-\sigma^{i}(u)\sin(t)\frac{\partial^2 L}{\partial u^i \partial u^n}_{(\sigma(u)\cos(t), \sin(t))}+\cos(t)\frac{\partial^2 L}{\partial u^n \partial u^n}_{(\sigma(u)\cos(t), \sin(t))}.$$
Using the zero homogenity of the partial derivatives of $L$ it follows that
$$\sigma^{j}(u)\cos(t)\frac{\partial^2 L}{\partial u^j \partial u^i}_{(\sigma(u)\cos(t), \sin(t))}+\sin(t)\frac{\partial^2 L}{\partial u^n \partial u^i}_{(\sigma(u)\cos(t), \sin(t))}=0$$
and, consequently,
\begin{equation}
\label{Lhospital}
w_u'(t)=
\end{equation}
$$\cos(t)\left(\sigma^{i}(u)\sigma^j(u)\frac{\partial^2 L}{\partial u^i \partial u^j}_{(\sigma(u)\cos(t), \sin(t))}+\frac{\partial^2 L}{\partial u^n \partial u^n}_{(\sigma(u)\cos(t),\sin(t))}\right)>0$$
away from $t=\pi/2$ or $-\pi/2$. Moreover (by the homogenity of degree one)
$$\frac{\partial L}{\partial u^n}_{(0,1)}=L(0,1)=L\left(p/L(p)\right)=1$$
and the proof is finished.
\end{proof}

\begin{center}
\begin{figure}
\includegraphics[viewport=0 0 595 842, scale=0.2]{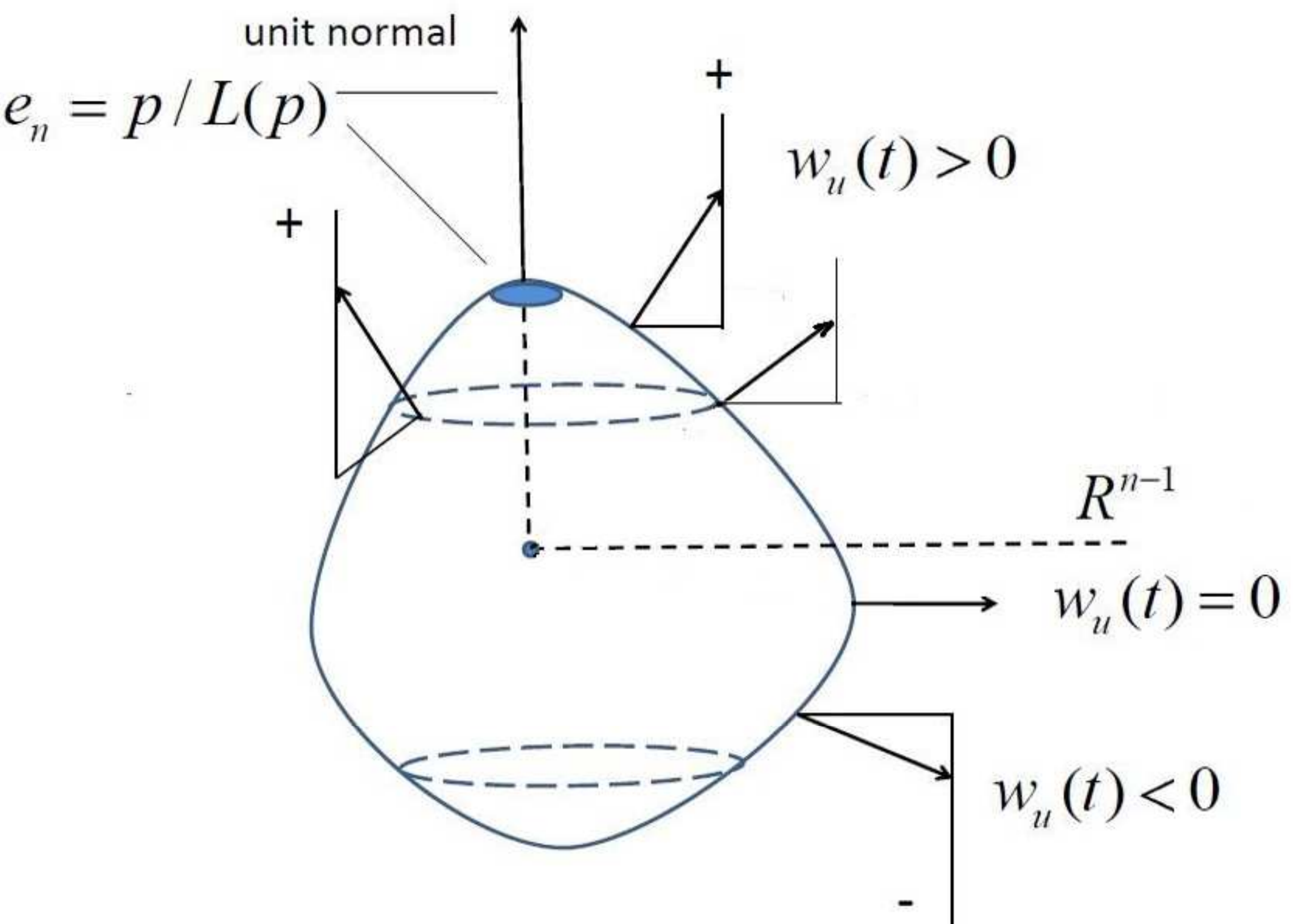}
\caption{Continuity and compactness.}
\end{figure}
\end{center}

Since $w_u(\frac{\pi}{2})=1$ we have that
\begin{equation}
\label{lowerbound}
\sup_{u\in H} \inf \left\{t\in \left[-\frac{\pi}{2}, \frac{\pi}{2}\right] \ | \ w_u(t) >0 \right\} =:t_0 < \frac{\pi}{2},
\end{equation}
where $t_0$ can be choosen independently of $p\neq {\bf{0}}$. The proof needs continuity and compactness\footnote{Note that the Euclidean inner product $g_{p/L(p)}$ is changing as the point $p$ is varying in the interior of $K$ but it is constant into radial directions, i.e. along the integral curves of the Liouville vector field. Therefore  inequality (\ref{lowerbound}) can be stated at first in a local neighbourhood (continuity; see the shaded region around $p/L(p)$ in Figure 2) and it can be extended for any $p\neq {\bf{0}}$ by using a finite covering of $\partial K$.}. Geometrically if we turn around by the parameter $u$ then the Euclidean gradient is pointed into the upper half space as Figure 2 shows. On the other hand if $L(p)$ is sufficiently close to $1$ then 
\begin{equation}
\label{upperbound}
t_0 < \inf_{u\in H} \sup \left\{t\in \left[-\frac{\pi}{2}, \frac{\pi}{2}\right] \ | \ w_u(t) < L(p)\right\} =:t_1 < \frac{\pi}{2}.
\end{equation}
For any $t\in [t_0, t_1]$
\begin{equation}
\label{estimate1}
w_u^2(t)=\left(\frac{\partial L}{\partial u^n}\right)^2_{(\sigma(u)\cos t, \sin t)} < L(p) \frac{\partial L}{\partial u^n}_{(\sigma(u)\cos t, \sin t)}
\end{equation}
which means that
$$r(p) > k_1^n\cdot k_2 \int_{H} u\mapsto \ \left(\int_{-\frac{\pi}{2}}^{\ \frac{\pi}{2}}
\left(1-L(p)\frac{\partial L}{\partial u^n}\right)_{(\sigma(u)\cos t, \sin t)}^{-\frac{n-1}{2}}\cos^{n-2}t\, dt\right)\, d\sigma >$$
$$k_1^n\cdot k_2 \int_{H} u\mapsto \ \left(\int_{t_0}^{t_1}
\left(\frac{\cos^2(t)}{1-w_u^2(t)}\right)^{\frac{n-1}{2}}\frac{1}{\cos (t)}\, dt\right)\, d\sigma$$

\begin{Lem} For any parameter $u$ the function
$$v_u(t):=\frac{1-w_u^2(t)}{\cos^2(t)}$$
tends to $1$ as $t\to \frac{\pi}{2}^-$, i.e.
$$\lim_{t\to \frac{\pi}{2}^-}\frac{1-w_u^2(t)}{\cos^2(t)}=1.$$
\end{Lem}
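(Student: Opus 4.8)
The plan is to apply l'Hospital's rule to the indeterminate form $0/0$ (by the previous Lemma, $w_u(t)\to 1$ as $t\to\frac\pi2^-$, so the numerator tends to $0$, and obviously $\cos^2 t\to 0$). First I would differentiate: the derivative of the numerator is $-2w_u(t)w_u'(t)$ and the derivative of the denominator is $-2\cos t\sin t$, so
$$
\lim_{t\to\frac\pi2^-}\frac{1-w_u^2(t)}{\cos^2 t}=\lim_{t\to\frac\pi2^-}\frac{w_u(t)w_u'(t)}{\cos t\sin t}.
$$
Now I would substitute the expression for $w_u'(t)$ obtained in formula (\ref{Lhospital}): since $w_u'(t)=\cos t\cdot Q_u(t)$, where
$$
Q_u(t):=\sigma^i(u)\sigma^j(u)\frac{\partial^2 L}{\partial u^i\partial u^j}_{(\sigma(u)\cos t,\sin t)}+\frac{\partial^2 L}{\partial u^n\partial u^n}_{(\sigma(u)\cos t,\sin t)},
$$
the troublesome factor $\cos t$ cancels and the limit becomes $\lim_{t\to\frac\pi2^-} w_u(t)Q_u(t)/\sin t$. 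Since $w_u(t)\to 1$ and $\sin t\to 1$, it remains to compute $\lim_{t\to\frac\pi2^-}Q_u(t)$, i.e. the value of $Q_u$ at the point $(0,1)=p/L(p)$.

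The key step is then to evaluate $Q_u$ at $(0,1)$ and show it equals $1$. Here I would use the homogeneity relations for the second partials of $L$. Because $L$ is positively homogeneous of degree one, its second partials are homogeneous of degree $-1$, and Euler's identity in the form $y^k\frac{\partial^2 L}{\partial u^k\partial u^j}=0$ holds. Evaluated at the Euclidean unit vector $e_n=(0,\dots,0,1)$ in the $g_{p/L(p)}$-orthonormal frame, this gives $\frac{\partial^2 L}{\partial u^n\partial u^j}_{(0,1)}=0$ for every $j$; in particular the mixed term with $u^n$ drops and, crucially, $\frac{\partial^2 L}{\partial u^n\partial u^n}_{(0,1)}=0$. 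So at the limit $Q_u(t)\to \sigma^i(u)\sigma^j(u)\,\frac{\partial^2 L}{\partial u^i\partial u^j}_{(0,1)}$, a sum over $i,j$ running only through $1,\dots,n-1$, where $(\sigma^i(u))$ is a Euclidean unit vector in the hyperplane $u^n=0$. This last quantity is a second fundamental form type expression: since $e_n$ is the outward unit normal to $\partial K$ at $(0,1)$ and the indicatrix is totally umbilical with $\nabla_X C=X$ (subsection 1.2), the Hessian of $L$ restricted to the tangent hyperplane, evaluated on a unit tangent vector, equals $1$. Concretely, I would differentiate the identity $g_{ij}y^iy^j=L^2$ or use $y^ig_{ij}=L\,\partial L/\partial u^j$ twice and specialize at $(0,1)$, where $g_{ij}=\delta_{ij}$ by the choice of orthonormal frame, to get $\sigma^i\sigma^j\frac{\partial^2 L}{\partial u^i\partial u^j}_{(0,1)}=\sigma^i\sigma^j g_{ij}=|\sigma(u)|^2=1$.

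The main obstacle I anticipate is the bookkeeping in the last step: carefully justifying that $\sigma^i(u)\sigma^j(u)\frac{\partial^2 L}{\partial u^i\partial u^j}_{(0,1)}=1$ rather than some other constant, which requires correctly exploiting that at $(0,1)$ the metric $g_{ij}=g_{p/L(p)}(\partial_{u^i},\partial_{u^j})$ is the identity matrix (forced by the orthonormal-basis choice $u^1(p)=\dots=u^{n-1}(p)=0$, $u^n(p)=L(p)$) together with the umbilicity relation connecting the Hessian of $L$ to $g_{ij}$ on the indicatrix. Differentiating $L\,\partial L/\partial u^j = y^i g_{ij}$ with respect to $u^k$ and evaluating at $(0,1)$, using $\partial L/\partial u^j_{(0,1)}=\delta_{jn}$ and the vanishing of $\partial^2 L/\partial u^n\partial u^j$ at that point, isolates $\frac{\partial^2 L}{\partial u^j\partial u^k}_{(0,1)}=g_{jk}_{(0,1)}=\delta_{jk}$ for $j,k\le n-1$; contracting with $\sigma^j\sigma^k$ and using $|\sigma(u)|=1$ then gives the claimed value $1$, and hence $\lim_{t\to\frac\pi2^-}v_u(t)=1$.
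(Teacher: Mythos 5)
Your proposal is correct and follows essentially the same route as the paper: l'Hospital's rule, substitution of the expression (\ref{Lhospital}) for $w_u'(t)$ so that the factor $\cos t$ cancels, vanishing of the $u^n$-terms by the homogeneity of the first partials of $L$, and identification of the surviving term $\sigma^i\sigma^j\,\partial^2 L/\partial u^i\partial u^j$ at $(0,1)$ with the value $1$. The only (cosmetic) difference is that you establish this last value by explicitly differentiating $y^ig_{ij}=L\,\partial L/\partial u^j$ to get $L\,\partial^2L/\partial u^i\partial u^j=g_{ij}-\partial_iL\,\partial_jL$ and using the orthonormality of the frame at $p/L(p)$, whereas the paper phrases the same identity as $m_{p/L(p)}(\mathbf{z}_u,\mathbf{z}_u)=1$ via the angular metric tensor.
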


\begin{proof}
According to Lemma 1 we can use L'Hospital rule as follows
$$\lim_{t\to \frac{\pi}{2}^-}v_u(t)=\lim_{t\to \frac{\pi}{2}^-}\frac{-2w_u(t) w_u'(t)}{-2\cos t \sin t}\stackrel{(\ref{Lhospital})}{=}$$
$$\frac{\partial L}{\partial u^n}_{(0,1)}\left(\sigma^{i}(u)\sigma^j(u)\frac{\partial^2 L}{\partial u^i \partial u^j}_{(0,1)}+\frac{\partial^2 L}{\partial u^n \partial u^n}_{(0,1)}\right),$$
where 
$$\frac{\partial L}{\partial u^n}_{(0,1)}=L(0,1)=1\ \ \textrm{and}\ \ \frac{\partial^2 L}{\partial u^n \partial u^n}_{(0,1)}=0$$
because of homogenity properties. Since for any parameter $u$ 
$${\bf{z}}_u=\sigma^{i}(u)\frac{\partial}{\partial u^i}_{(0,1)}$$
is a unit vector with respect to $g_{p/L(p)}$ and ${\bf{z}}_u$  is tangential to $\partial B$ at ($0,1$) it follows that
$$\lim_{t\to \frac{\pi}{2}^-}v_u(t)=m_{p/L(p)}({\bf{z}}_u, {\bf{z}}_u)=1,$$
where $m$ is the angular metric tensor.
\end{proof}

By Lemma's 1 and 2 
$$0<k_3:=\sup_{H\times [t_0,\frac{\pi}{2}]}v_u(t) < \infty$$
and the constant $k_3$ can be choosen independently of $p\neq {\bf{0}}$ (continuity and compactness). We have that 
$$r(p)>k_1^n\cdot k_2 \int_{H} u\mapsto \ \left(\int_{t_0}^{t_1}
\left(\frac{\cos^2(t)}{1-w_u^2(t)}\right)^{\frac{n-1}{2}}\frac{1}{\cos (t)}\, dt\right)\, d\sigma >$$
$$k_1^n\cdot k_2\cdot k_3^{-\frac{n-1}{2}} \int_{H} u\mapsto \ \left(\int_{t_0}^{t_1}
\frac{1}{\cos (t)}\, dt\right)\, d\sigma=\omega_{n-2}\cdot k_1^n\cdot k_2\cdot k_3^{-\frac{n-1}{2}}\int_{t_0}^{t_1}
\frac{1}{\cos (t)}\, dt$$
where $\omega_{n-2}$ is the Euclidean area of the ($n-2$)-dimensional Euclidean unit sphere. Finally 
$$\int_{t_0}^{t_1}
\frac{1}{\cos (t)}\, dt=\left[\ln \tan\left(\frac{t}{2}+\frac{\pi}{4}\right)\right]_{t_0}^{t_1}=\ln \tan\left(\frac{t_1}{2}+\frac{\pi}{4}\right)-\ln \tan\left(\frac{t_0}{2}+\frac{\pi}{4}\right)$$
and thus
$$\lim_{L(p)\to 1^-} r(p)\geq $$
$$\omega_{n-2}\cdot k_1^n\cdot k_2\cdot k_3^{-\frac{n-1}{2}} \cdot\left(\lim_{t_1\to \frac{\pi}{2}^-} \ln \tan \left(\frac{t_1}{2}+\frac{\pi}{4}\right)-\ln \tan\left(\frac{t_0}{2}+\frac{\pi}{4}\right)\right)=\infty$$
as was to be proved. 

\section{Applications to Finsler manifolds}

In what follows we consider a Finsler manifold $M$ equipped with the Finslerian fundamental function $F$. At each point of the manifold we can take the interior of the indicatrix body $K_p$ as a Funk manifold. Let us define the function
$$R\colon v_p\in \textrm{int\ } K_p \to R(v_p):=r(v_p),$$
where $r$ is the area function of the Funk manifold induced by the indicatrix body $K_p$ in the tangent space $T_pM$. Using a local neighbourhood
we can also introduce the mapping
$$Y(q^1, \ldots, q^n, v^1, \ldots, v^n):=\left(\frac{\partial R}{\partial y^1}_{v_q}, \ldots, \frac{\partial R}{\partial y^n}_{v_q}\right),$$
where
$u(q)=(q^1, \ldots, q^n)$ and $v_q=v^i\frac{\partial}{\partial u^i}_{q}$.
By Corollary 2
$$\det \left(\frac{\partial Y^i}{\partial y^j}\right)=\det \left(\frac{\partial^2 r}{\partial y^j\partial y^j}\right)\neq 0$$
and the implicit function theorem allow us to conclude that there exists a smooth function $g\colon \mathcal{U}\to \mathbb{R}^n$ such that $Y(q^1, \ldots, q^n, g(q^1, \ldots, q^n))=0$. Therefore the minimizers of the area functions in the tangent spaces can be expressed as
$$V_q=g^1(q^1, \ldots, q^n)\frac{\partial}{\partial u^1}_{q}+\ldots+g^n(q^1, \ldots, q^n)\frac{\partial}{\partial u^n}_{q}$$
because of the vanishing of the partial derivatives of $R$ with respect to $y^1$, ..., $y^n$. 

\begin{Thm}
Let $M$ be a Finsler manifold with Finslerian fundamental function $F$ and consider the interiors of the indicatrix bodies as Funk manifolds in the tangent spaces. The mapping which sends any point to the uniquely determined minimizer of the area function of the corresponding Funk manifold constitutes a smooth vector field $V$ on the base manifold. The manifold $M$ equipped with the Finslerian fundamental function $F_{V}$ defined by
$$F\left(V_p+\frac{v_p}{F_V(v_p)}\right)=1\ \ \ (p\in M)$$
has balanced indicatrices.  
\end{Thm}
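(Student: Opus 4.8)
The plan is to argue in two stages: first that $p\mapsto V_p$ defines a genuine differentiable vector field $V$ on $M$, and then that $F_V$ is a Finslerian fundamental function whose indicatrices are balanced. The first stage is in substance contained in the discussion preceding the theorem; what needs the results of the previous sections is the \emph{global} well-definedness of $V_p$. Fix $p$. The area function $r$ of the Funk manifold $\mathrm{int}\,K_p\subset T_pM$ is strictly convex by Corollary~2, and by the asymptotic blow-up of Theorem~4, applied to the body $K_p$, one has $r(q)\to\infty$ as $q$ tends to $\partial K_p$. Hence $r$ attains its infimum over $\mathrm{int}\,K_p$ at exactly one interior point, namely $V_p$. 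Since this point is characterised purely by the convex body $K_p\subset T_pM$, it is independent of the chosen coordinates, so $V$ is a well-defined vector field; its differentiability comes from the implicit function theorem exactly as indicated before the statement --- the vertical Jacobian of $Y$ being the nonsingular matrix $\bigl(\partial^2 r/\partial y^i\partial y^j\bigr)$ by Corollary~2 --- and the resulting local solutions glue together because the minimiser is unique.

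For the second stage, note first that $F_V$ is again a legitimate Finslerian fundamental function. The relation $F\bigl(V_p+v_p/F_V(v_p)\bigr)=1$ says exactly that $F_V(\cdot)_p$ is the Minkowski functional of the translated body $K_p-V_p$, that is, the member of the smoothly varying family of Finsler--Minkowski functionals obtained from $K_p$ by moving the origin to $V_p$ (subsection~1.4 and Theorem~1). In particular the Hessian of $(1/2)F_V^2$ remains positive definite, a translation not affecting the second order behaviour of $\partial K_p$, and $F_V$ depends differentiably on $(p,v)$ since $V$ is differentiable in $p$ and the Funk family depends smoothly on its base point.

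It remains to check that each indicatrix body of $(M,F_V)$, namely $K_p-V_p$, is balanced. By the characterisation of balancedness in Corollary~3 this holds if and only if the area function of the Funk manifold $\mathrm{int}(K_p-V_p)$ has a global minimiser at the origin. Now a translation of a convex body induces a translation of its whole Funk structure, so the Funk metric of $K_p-V_p$ based at a point $q'$ coincides with the Funk metric of $K_p$ based at $q'+V_p$; consequently the area function of $\mathrm{int}(K_p-V_p)$ equals $q'\mapsto r(q'+V_p)$. Its global minimiser therefore sits at $q'=\mathbf{0}$ precisely because $V_p$ minimises $r$ over $\mathrm{int}\,K_p$ by the construction of the first stage. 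Hence $K_p-V_p$ is balanced, and since $p$ was arbitrary every indicatrix of $(M,F_V)$ is balanced.

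I expect the only genuine obstacle to lie in the first stage, in upgrading the purely local implicit-function solution to a globally defined vector field: this is exactly where Theorem~4 is indispensable, since without the blow-up of the area near $\partial K_p$ the minimiser of $r$ could run off to the boundary and $V_p$ would not exist, strict convexity (Corollary~2) alone guaranteeing uniqueness but not interior existence. Granting Theorem~4, the rest is a matter of unwinding the Funk-metric formalism of subsection~1.4, the conformality statement (Theorem~1), and the balancedness criterion of Corollary~3.
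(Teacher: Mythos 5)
Your proposal is correct and follows essentially the same route as the paper: existence and uniqueness of the interior minimizer from strict convexity (Corollary~2) together with the boundary blow-up (Theorem~4), smoothness of $V$ via the implicit function theorem applied to the fibre gradient of $r$, and balancedness of the translated bodies read off from Corollary~3 after observing that translating $K_p$ by $V_p$ translates the whole Funk structure and hence moves the minimizer of the area function to the origin. The paper leaves most of the second stage implicit, so your explicit unwinding of the translation-equivariance of the area function is a faithful elaboration rather than a different argument.
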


\begin{Rem}
\emph{Finsler manifolds having balanced indicatrices represent a class of Finsler spaces such that the so-called Brickell's conjecture holds; see \cite{Brickell} and \cite{Vincze3}.}
\end{Rem}

\section{Example I - Randers manifolds}

\noindent
I. The family of the unit balls of a Randers manifold is given by translations of Riemannian unit balls. Analytically the Minkowski functionals are coming from a Riemannian metric tensor by using one-form (linear) perturbation in the tangent spaces. This important type of Finsler manifolds was introduced by G. Randers in 1941. Randers manifolds occour naturally in physical applications related to electron optics, navigation problems \cite{BRS} or the Lagrangian of relativistic electrons \cite{AIM}. According to the importance of these applications Randers manifolds are a prosperous subject of the investigations up to this day - see e.g. \cite{CS}.

\vspace{0.2cm}
\noindent
II. Consider the Randers manifold
$$F(v_p):=\sqrt{\alpha_p(v_p,v_p)}+\beta(v_p),$$
where $\alpha$ is a Riemannian metric and $\beta$ is a 1-form satisfying the condition
$$\sup_{\alpha_p(v_p,v_p)=1} \beta(v_p) <1$$
for any $p\in M$. Using the dual vector field $\beta^{\sharp}$ defined by $\alpha(\beta^{\sharp}, Y)=\beta(Y)$
we have that
\begin{equation}
\label{example1}
V=-\frac{\beta^{\sharp}}{1-\|\beta^{\sharp}\|^2}.
\end{equation}
To prove the formula for the minimizing vector field consider an $\alpha_p$ - orthonormal basis $e_1, \ldots, e_n$ in the tangent space $T_pM$ such that
$\beta^{\sharp}_p=\beta_n e_n$. The equation of the indicatrix hypersurface $\partial K_p$ is
$$(y^1)^2+\ldots+(y^{n-1})^2+(1-\beta_n^2)\left(y^n+\frac{\beta_n}{1-\beta_n^2}\right)^2=\frac{1}{1-\beta_n^2},$$ 
where $\beta_n$ is the (only) surviving component of $\beta^{\sharp}_p$. It is a quadric \cite{Shen2} centered at the point
$$y^1=\ldots=y^{n-1}=0\ \ \textrm{and}\ \ y^n=-\frac{\beta_n}{1-\beta_n^2}$$
which is just the value of $V$ at $p$. Therefore $F_V$ restricted to $T_pM$ is a Riemannian fundamental function and the partial derivatives 
$$\frac{\partial F_V}{\partial y^1}, \ldots, \frac{\partial F_V}{\partial y^n}$$
have vanishing integrals on a centered Euclidean sphere. The area $\omega_{n-1}$ of the ($n-1$) - dimensional Euclidean unit sphere is the minimum of the area function. 

\section{Example 2}

\noindent
I. Consider the standard Euclidean ball $B$ with respect to the canonical inner product of $\mathbb{R}^n$. It is known \cite{Shen2} that the Funk space induced by $B$ has a Randers functional
$$F(v_p)=L_p(v)=\frac{1}{1-|p|^2}\sqrt{|v|^2(1-|p|^2)+\langle v, p \rangle^2}+\frac{\langle v, p\rangle}{1-|p|^2},$$
where $p$ is an interior point of $B$,
$$|p|^2=\langle p, p \rangle,\ \ \beta_p(v):=\frac{\langle v, p\rangle}{1-|p|^2},$$
$$\alpha_p(v,w)=\frac{1}{1-|p|^2}\langle v, w\rangle+\frac{\langle v, p \rangle}{1-|p|^2} \cdot \frac{\langle w, p\rangle}{1-|p|^2}.$$
As usual we omit the identification of the base point of the tangent vectors of the manifold $\mathbb{R}^n$. In order to use formula (\ref{example1}) we have to compute $\beta^{\sharp}$ and its norm, where both the sharp - operator and the norm are taken with respect to $\alpha$ (which is different from the canonical inner product). We have that
$$\alpha_p(\beta_p^{\sharp},w)=\beta_p(w)=\frac{\langle w, p\rangle}{1-|p|^2}.$$
On the other hand
$$\alpha_p(\beta_p^{\sharp},w)=\frac{1}{1-|p|^2}\langle \beta_p^{\sharp}, w\rangle+\frac{\langle \beta_p^{\sharp}, p \rangle}{1-|p|^2} \cdot \frac{\langle w, p\rangle}{1-|p|^2}$$
which means that
$$\langle p-\beta_p^{\sharp}, w \rangle=\frac{\langle \beta_p^{\sharp}, p\rangle}{1-|p|^2} \left \langle  p, w \right \rangle\ \ \Rightarrow \ \ p-\beta_p^{\sharp}=\frac{\langle \beta_p^{\sharp}, p\rangle}{1-|p|^2}\cdot p.$$
Taking the (canonical) inner product with $p$ it follows that
$$\frac{\langle \beta_p^{\sharp}, p\rangle}{1-|p|^2}=|p|^2\ \ \Rightarrow \ \ \frac{\beta_p^{\sharp}}{1-|p|^2}=p.$$
Finally
$$\|\beta_p^{\sharp}\|=|p|^2\ \ \ \textrm{and}\ \ \ V_p=-\frac{\beta_p^{\sharp}}{1-\| \beta_p^{\sharp}\|^2}=-p$$
which is just the opposite vector field to the Liouville vector field (see Figure 3).

\begin{center}
\begin{figure}
\includegraphics[viewport=0 0 595 842, scale=0.1]{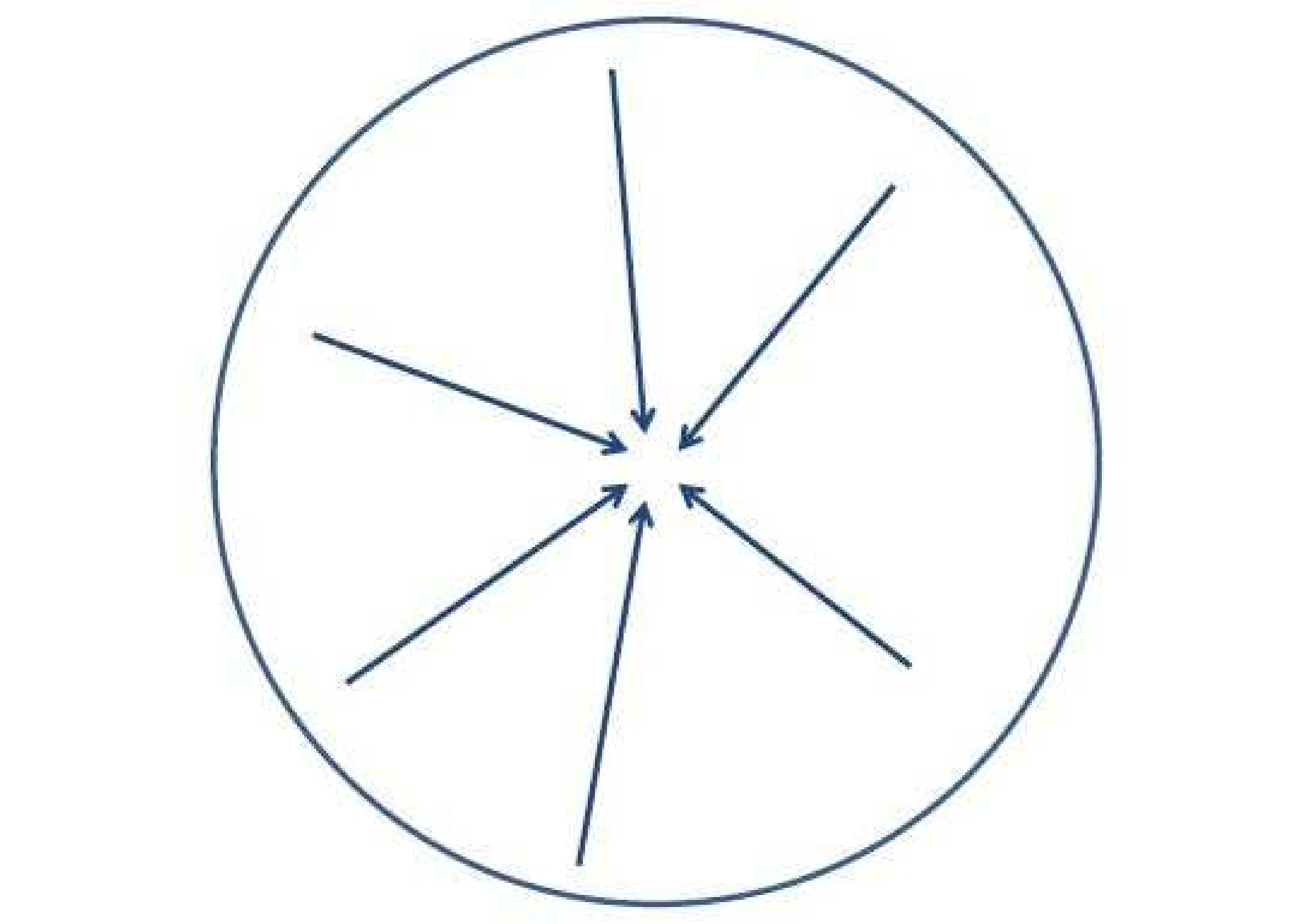}
\caption{The minimizing vector field.}
\end{figure}
\end{center}

\vspace{0.2cm}
\noindent
II. Using formula (\ref{volint}) we can write that for any $p\in \ \textrm{int}\ B$ 
$$r(p)=\int_{\partial B} \left(1-\frac{\langle p, v \rangle}{|v|} \right)^{-\frac{n-1}{2}}\, \mu.$$
If $u^1(p)=\ldots=u^{n-1}(p)=0$ and $u^n(p)=s$ then
$$r(s)=\omega_{n-2}\int_{-\frac{\pi}{2}}^{\ \frac{\pi}{2}} \left(\frac{1}{1-s\cdot \sin (t)}\right)^{\frac{n-1}{2}}\cos^{n-2}(t)\, dt$$
in case of dimension $n\geq 3$. Especially if $n=3$ then 
$$r(s)=\frac{1}{s}\ln \frac{1+s}{1-s},\ \ \textrm{i.e.}\ \ r(p)=\frac{1}{L(p)}\ln \frac{1+L(p)}{1-L(p)}.$$
In general if the dimension is of the form $n=2k+1$ then we can use the standard substitution $x=\tan \frac{t}{2}$ to originate the problem in the integration of partial fractions:
$$t=2\ \textrm{arctan} \ x,\ \ dt=\frac{2}{1+t^2}dx,\ \ \cos(t)=\frac{1-t^2}{1+t^2}\ \ \textrm{and}\ \ \sin(t)=\frac{2t}{1+t^2}.$$

\vspace{0.2cm}
\noindent
III. If $n=2$ then we have that
$$r(p)=\int_{\ 0}^{2\pi} \frac{1}{\sqrt{1-L(p)\cdot \sin (t)}}\, dt$$
which can be numerically integrated by using binomial expansion for negative and fractional powers.

\section*{Acknowledgement}
The work is supported by the University of Debrecen's internal research project RH/885/2013.

\end{document}